\newtheorem{theorem}{Theorem}[section]
\newtheorem{lemma}[theorem]{Lemma}
\newtheorem{proposition}[theorem]{Proposition}
\newtheorem{Definition}{Definition}[section]
\newtheorem{otherth}{\bf Theorem}
\newtheorem{otherl}{\bf Lemma}
\newtheorem{rk}{\textbf{Remark}}
\newcommand{\D}{\mathbb D}
\numberwithin{equation}{section}
\begin{document}

\title[Schatten class Toeplitz operators]{Schatten class Toeplitz operators acting on large weighted Bergman spaces}
\

\author[H. Arroussi]{Hicham Arroussi}
\address{Hicham Arroussi \\Departament de Matem\`{a}tica Aplicada i Analisi\\
Universitat de Barcelona\\ Gran Via 585 \\
08007 Barcelona\\
Spain} \email{arroussihicham@yahoo.fr}

\author[I. Park]{Inyoung Park}
\address{Inyoung Park \\ Center for Geometry and its Applications \\
Pohang University of Science and Technology \\Pohang 790-784\\ South Korea} \email{iypark26@postech.ac.kr}

\author[J. Pau]{Jordi Pau}
\address{Jordi Pau \\Departament de Matem\`{a}tica Aplicada i Analisi\\
Universitat de Barcelona\\ Gran Via 585 \\
08007 Barcelona\\
Spain} \email{jordi.pau@ub.edu}

\subjclass[2010]{30H20, 46E15, 47B10, 47B35}
\keywords{weighted Bergman spaces, Toeplitz operators, reproducing kernels, Schatten classes}

\thanks{The second author was supported by the National Research Foundation of Korea (NRF) grant funded by the Korea government (MSIP) (No.2011-0030044), while the third author was
 supported by  DGICYT grants MTM2011-27932-C02-01 and MTM2014-51834-P
(MCyT/MEC) and 2014SGR289 (Generalitat de Catalunya)}

\maketitle


\begin{abstract}
 A full description of the membership in the Schatten ideal $S_ p(A^2_{\omega})$ for $0<p<\infty$ of the Toeplitz operator acting on large weighted Bergman spaces is  obtained.
\end{abstract}


\section{Introduction}

Let $H(\D)$ denote the space of all analytic functions on $\D,$ where $\D$ is the open unit disk in the complex plane $\mathbb{C}$.
A weight is a positive function $\omega\in L^1(\mathbb{D}, dA),$ with $dA(z) = \frac{dxdy}{\pi}$ being the normalized area measure on $\D$.
The weighted Bergman space $A^2_{\omega}$ is the space of all  functions $f\in H(\D)$ such that
\[
\|f \|_{A^2_{\omega}} =\bigg( \int_{\D}|f(z)|^2  \,\omega(z)\, dA(z)\bigg)^{\frac{1}{2}} < \infty.
\]
We are going to study Toeplitz operators acting on these weighted Bergman spaces, for a certain class $\mathcal{W}$ of radial rapidly decreasing weights. The class $\mathcal{W}$ is the same class of weights considered in \cite{PP1} and \cite{GP1}, and consists of those radial decreasing weights  of the form $\omega(z)=e^{-2\varphi(z)}$,
where $\varphi\in C^2(\D)$ is a radial function such  that
$\left(\Delta\varphi(z)\right)^{-1/2}\asymp \tau(z)$, for
 a radial positive function $\tau(z)$ that decreases to $0$ as
$|z|\rightarrow 1^{-}$, and $\lim_{r\to 1^-}\tau'(r)=0.$ Here
$\Delta$ denotes the standard Laplace operator.  Furthermore, we shall also suppose that either there exist  a constant $C>0$  such that
$\tau(r)(1-r)^{-C}$ increases for $r$ close to $1$ or
 \[
 \lim_{r\to 1^-}\tau'(r)\log\frac{1}{\tau(r)}=0.
 \]
 The prototype is  the exponential type weight
\begin{equation}\label{Exp}
\omega_{\alpha}(z) = \exp\left(\frac{-1}{(1-|z|^2)^\alpha}\right)  ,   \  \alpha > 0 ,
\end{equation}
 but the class $\mathcal{W}$ also contains the double exponential weights
\[\omega(z)=\exp\left(-\gamma\exp\Big(\frac{\beta}{(1-|z|)^\alpha}\Big)\right),\quad
\alpha,\beta,\gamma>0.\]
For the weights $\omega$ in our class,  the point evaluations $L_ z$ are bounded linear functionals on $A^2_{\omega}$ for each $z\in \D$. In particular, the space $A^2_{\omega}$ is a reproducing kernel Hilbert space:  for each $z\in \D$, there are functions $K_ z\in A^2_{\omega}$ with $\|L_ z\|=\|K_ z\|_{A^2_{\omega}}$ such that $L_ z f=f(z)=\langle f, K_ z \rangle _{\omega}$, where
\[
\langle f,g \rangle_{\omega}=\int_{\D} f(z)\,\overline{g(z)} \,\omega(z)\,dA(z)
\]
is the natural inner product in $L^2(\D,\omega dA)$. The function $K_z$ is called the reproducing kernel for the Bergman space $A^2_{\omega}$ and has the property that $K_ z(\xi)=\overline{K_{\xi}(z)}$. The study of the
 properties of the Bergman spaces with exponential type weights has attracted a lot of attention in recent years \cite{D2,GP1,KrMc,LR1,PP1,PP2}, and new techniques different from the ones used on  standard Bergman spaces are required.

Let $\omega \in \mathcal{W}$ and $\mu$ be a finite positive  Borel measure on $\D$.
The Toeplitz operator $T^\omega_\mu$ with symbol $\mu $ is  given  by
\[
 T_\mu f(z) = T^\omega_\mu f (z) :  = \int_{\D}{f(\xi)\,\overline{K_z(\xi)}\,\omega(\xi)\, d\mu(\xi)},\qquad f\in H(\D).
\]

There is a lot of work on the study of Toeplitz operators acting on several spaces of holomorphic functions \cite{CIJ, HL, Lue, Me, RW1, Z1}, and the theory is especially well understood in the case of Hardy spaces or standard Bergman spaces (see \cite{Zhu} and the references therein). Luecking \cite{Lue} was the first to study Toeplitz operators on the Bergman spaces with measures as symbols, and the study of Toeplitz operators acting on large weighted Bergman spaces was initiated by Lin and Rochberg \cite{LR2}, where they proved the following result (as the conditions of the weights are slightly different, we offer a proof in Section 3).

\begin{theorem}\label{TOPQ}
Let  $\omega \in \mathcal{W}$. Then
\begin{enumerate}
\item[(i)] $ T_\mu : A^2_{\omega} \rightarrow A^2_{\omega} $ is bounded if and only if for each $\delta>0$ small enough, one has
 \begin{equation}\label{tauomega1}
C_{\mu}:=\sup_{\substack{z\in\D}}\,\widehat{\mu}_{\delta}(z) < \infty.
\end{equation}
Moreover, in that case,
$
\|T_{\mu}\|\asymp  C_{\mu}.
$
\item[(ii)] $ T_\mu : A^2_{\omega} \rightarrow A^2_{\omega} $ is compact if and only if for each $\delta>0$ small enough, one has
\begin{equation}\label{CompCond}
\lim\limits_{r\rightarrow1^-}\sup\limits_{|z|> r} \widehat{\mu}_{\delta}(z) = 0.
\end{equation}
\end{enumerate}
\end{theorem}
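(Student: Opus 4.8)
The plan is to exploit that $T_\mu$ is a positive self-adjoint operator and to reduce both statements to embedding/vanishing properties of the measure $\omega\,d\mu$ through its quadratic form. Using Fubini's theorem together with the reproducing property $\overline{K_z(\xi)}=K_\xi(z)$, one computes
\[
\langle T_\mu f, f\rangle_\omega=\int_\D |f(\xi)|^2\,\omega(\xi)\,d\mu(\xi),
\]
so that $T_\mu\ge 0$ and $\|T_\mu\|\asymp\sup_{\|f\|_{A^2_\omega}=1}\int_\D|f|^2\,\omega\,d\mu$. Thus part (i) amounts to deciding when $A^2_\omega$ embeds continuously into $L^2(\omega\,d\mu)$, and part (ii) to when that embedding is compact.

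For the necessity in (i) I would test the quadratic form on the normalized reproducing kernels $k_z=K_z/\|K_z\|_{A^2_\omega}$. Using the kernel size estimate $K_z(z)=\|K_z\|^2\asymp\big(\omega(z)\tau(z)^2\big)^{-1}$ together with the pointwise lower bound $|k_z(\xi)|^2\omega(\xi)\gtrsim\tau(z)^{-2}$ on the disk $D(z,\delta\tau(z))$ (both available for the class $\mathcal{W}$), one gets
\[
\|T_\mu\|\ge\langle T_\mu k_z, k_z\rangle_\omega\ge\int_{D(z,\delta\tau(z))}|k_z|^2\,\omega\,d\mu\gtrsim\widehat{\mu}_\delta(z),
\]
whence $C_\mu\lesssim\|T_\mu\|$. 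For sufficiency I would fix a $\tau$-lattice $\{z_k\}$ whose disks $D(z_k,\delta\tau(z_k))$ cover $\D$ with finite overlap and split the integral over these disks. On each one, the subharmonic-type mean value estimate $|f(\xi)|^2\omega(\xi)\lesssim\tau(z_k)^{-2}\int_{\widehat D(z_k)}|f|^2\omega\,dA$ over a slightly dilated disk $\widehat D(z_k)$ converts the local mass into $\widehat{\mu}_\delta(z_k)\int_{\widehat D(z_k)}|f|^2\omega\,dA\le C_\mu\int_{\widehat D(z_k)}|f|^2\omega\,dA$; summing and invoking the finite overlap yields $\|T_\mu\|\lesssim C_\mu$, and hence $\|T_\mu\|\asymp C_\mu$.

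For part (ii) I would use the standard fact that a positive operator is compact if and only if its quadratic form tends to $0$ along every bounded sequence converging weakly to $0$. The necessity follows by testing again on $k_{z_n}$ with $z_n\to\partial\D$: one has $k_{z_n}\rightharpoonup 0$, since $\langle g,k_{z_n}\rangle=g(z_n)/\|K_{z_n}\|\to 0$ first for polynomials (which are dense in $A^2_\omega$) and then for all $g$ by the uniform bound $\|k_{z_n}\|=1$; compactness then forces $\widehat{\mu}_\delta(z_n)\lesssim\langle T_\mu k_{z_n},k_{z_n}\rangle_\omega\to 0$, giving \eqref{CompCond}. For sufficiency I would take a bounded sequence $f_n\rightharpoonup 0$ (equivalently $f_n\to0$ uniformly on compact subsets of $\D$), split $\D$ into $\{|z|\le r\}$ and $\{|z|>r\}$, bound the tail by $\big(\sup_{|z|>r}\widehat{\mu}_\delta(z)\big)\,\|f_n\|^2$ through the same lattice decomposition, and observe that on the compact part $f_n\to 0$ uniformly while $\mu$ is finite; letting $n\to\infty$ and then $r\to1^-$ shows $\langle T_\mu f_n,f_n\rangle_\omega\to0$.

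The routine ingredients—the kernel asymptotics, the existence of finitely overlapping $\tau$-lattices, the mean value estimates, and the density of polynomials—are structural features of $\mathcal{W}$ that I would cite from \cite{PP1,GP1}. The genuinely delicate point is the lower bound $|k_z(\xi)|^2\omega(\xi)\gtrsim\tau(z)^{-2}$ on the whole disk $D(z,\delta\tau(z))$, which demands that the reproducing kernel neither oscillate nor decay near the diagonal; this is where the hypotheses on $\tau$ (and thus on $\varphi$) enter, and it is the main obstacle to making the necessity arguments in both parts rigorous.
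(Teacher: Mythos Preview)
Your argument is correct and follows essentially the same route as the paper's: test on (normalized) reproducing kernels for necessity, and identify $\langle T_\mu f,g\rangle_\omega=\int f\bar g\,\omega\,d\mu$ to reduce sufficiency to a Carleson-type embedding for $\omega\,d\mu$. The only cosmetic difference is that the paper packages the sufficiency direction (in both (i) and (ii)) as a citation to the Carleson embedding theorem from \cite{PP1} (Theorem~B here), whereas you unfold that proof inline via the $(\delta,\tau)$-lattice; the near-diagonal kernel lower bound you flag as the delicate point is exactly the estimate \eqref{RK-Diag}, imported from \cite{LR2}.
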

\mbox{}
\\
Here $\widehat{\mu}_{\delta}$ is the averaging function defined as
\[
\widehat{\mu}_{\delta}(z)=\frac{\mu(D(\delta \tau(z))}{\tau(z)^2},\qquad z\in \D.
\]
Here $\tau(z)$ is the function associated to the weight $\omega$, and  $D(\delta \tau(z))$ denotes an euclidian disk centered at $z$ and radius $\delta \tau(z)$.\\

Before introducing our main result, we need to review some definitions and properties of Schatten ideals. Let $H$  be a separable Hilbert space, and  $0<p<\infty$. The Schatten class $S_ p=S_ p(H)$ consists of those compact operators $T$ on $H$
for which its sequence of singular numbers $\{\lambda_ n\}$  belongs to the
sequence space $\ell^p$ (the singular numbers are the square roots of the eigenvalues of the positive operator $T^*T$, where $T^*$ is the Hilbert adjoint of $T$). For $p\ge 1$, the class $S_ p$ is a Banach space
with the norm $\|T\|_ p=\left (\sum_ n |\lambda_ n|^p\right
)^{1/p},$ while for $0<p<1$ one has \cite{M} the inequality $\|S+T\|_
p^p\le \|S\|_ p^p+\|T\|_ p^p.$ Also, one has $T\in S_p$ if and only if $T^*T \in  S_{p/2}.$ We refer to \cite[Chapter 1]{Zhu} for a brief account on Schatten classes.\\

 Concerning membership in Schatten ideals of Toeplitz operators, a description for the standard Bergman spaces was obtained by Luecking \cite{Lue}. For the case of our large Bergman spaces we have the following characterization, that is the main result of the paper, and solves a problem posed by Lin and Rochberg in \cite{LR2}.
\begin{theorem}\label{MT3}
Let $\omega \in \mathcal{W}$, and $\mu$ be a finite positive Borel measure on $\D$. For $0<p<\infty$,  the Toeplitz operator $T_{\mu}$ is in Schatten class $S_ p(A^2_{\omega})$ if and only if the averaging function $\widehat{\mu}_{\delta}$ is in $L^p(\D,d\lambda_{\tau})$, where $d\lambda_{\tau}=\tau(z)^{-2}dA(z)$.
\end{theorem}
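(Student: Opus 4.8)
The plan is to characterize $S_p$ membership through two intermediate quantities that are easier to compare with the operator: the Berezin transform $\widetilde{T_\mu}(z)=\langle T_\mu k_z,k_z\rangle_\omega$, where $k_z=K_z/\|K_z\|_{A^2_\omega}$ is the normalized reproducing kernel, and the values $\widehat{\mu}_\delta(a_j)$ of the averaging function on a fixed $\tau$-lattice $\{a_j\}$, that is, a sequence of points separated in the distance induced by $\tau$ whose disks $D(\delta\tau(a_j))$ cover $\D$ with finite overlap. First I would record the two elementary but essential reductions. Since $\langle T_\mu f,f\rangle_\omega=\int_\D |f(\xi)|^2\omega(\xi)\,d\mu(\xi)\ge 0$, the operator $T_\mu$ is positive, which is what makes the Schatten-class machinery available. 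Using the kernel estimate $|k_z(\xi)|^2\omega(\xi)\asymp\tau(z)^{-2}$ for $\xi\in D(\delta\tau(z))$ together with the off-diagonal decay of $K_z$ (both taken from the earlier sections), I would show that the Berezin transform and the averaging function have comparable $L^p(d\lambda_{\tau})$ size: the lower bound $\widehat{\mu}_\delta(z)\lesssim\widetilde{T_\mu}(z)$ is immediate from integrating over $D(\delta\tau(z))$, while the reverse estimate in $L^p$ follows by dominating $\widetilde{T_\mu}(z)$ by a discrete convolution $\sum_j e^{-c\,d_\tau(z,a_j)}\widehat{\mu}_\delta(a_j)$ and applying a Schur/Young-type bound on $L^p(d\lambda_{\tau})$. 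Because $\tau$ varies slowly and $\lambda_{\tau}(D(\delta\tau(a_j)))\asymp\delta^2$, the three conditions $\widehat{\mu}_\delta\in L^p(d\lambda_{\tau})$, $\{\widehat{\mu}_\delta(a_j)\}\in\ell^p$ and $\widetilde{T_\mu}\in L^p(d\lambda_{\tau})$ are then equivalent, so it remains to compare $T_\mu\in S_p$ with the sequential condition.

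For the necessity direction I would treat the ranges $p\ge 1$ and $0<p<1$ separately. When $p\ge 1$, consider the analysis operator $G\colon\ell^2\to A^2_\omega$, $G(\{c_j\})=\sum_j c_j k_{a_j}$, which is bounded because the separation of the lattice yields the Bessel bound $\sum_j|\langle f,k_{a_j}\rangle_\omega|^2\lesssim\|f\|_{A^2_\omega}^2$. The diagonal entries of the positive operator $G^*T_\mu G$ on $\ell^2$ are exactly $\langle T_\mu k_{a_j},k_{a_j}\rangle_\omega=\widetilde{T_\mu}(a_j)\gtrsim\widehat{\mu}_\delta(a_j)$, and since for a positive operator the $\ell^p$ norm of the diagonal is dominated by the $S_p$ norm, combined with $\|G^*T_\mu G\|_{S_p}\le\|G\|^2\|T_\mu\|_{S_p}$ this gives $\sum_j\widehat{\mu}_\delta(a_j)^p\lesssim\|T_\mu\|_{S_p}^p$. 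The range $0<p<1$ is the main obstacle: here no comparison between the diagonal and the Schatten norm is available, and I would follow Luecking's device of introducing random signs $\varepsilon_j$ and the operators $T_{\mu_\varepsilon}$ built from the localized pieces of $\mu$, estimating $\mathbb{E}\,\|T_{\mu_\varepsilon}\|_{S_p}^p$ from below by $\sum_j\widehat{\mu}_\delta(a_j)^p$ through Khinchine's inequality, the near-orthogonality of $\{k_{a_j}\}$ over a sufficiently sparse sublattice being used to absorb the off-diagonal interactions.

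The sufficiency direction starts from a localized decomposition $\mu=\sum_j\mu_j$ subordinate to the covering, with each $\mu_j$ supported in $D(\delta\tau(a_j))$, so that $T_\mu=\sum_j T_{\mu_j}$. The key local estimate is that, because the mass of $\mu_j$ sits in a single $\tau$-disk, $T_{\mu_j}$ is dominated by $\widehat{\mu}_\delta(a_j)$ times a fixed-trace positive localization operator whose singular values decay at a uniform rate; this yields $\|T_{\mu_j}\|_{S_p}\lesssim\widehat{\mu}_\delta(a_j)$ uniformly in $j$. For $0<p\le 1$ the quasi-triangle inequality $\|T_\mu\|_{S_p}^p\le\sum_j\|T_{\mu_j}\|_{S_p}^p$ then closes the argument at once. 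For $p\ge 1$ the crude triangle inequality only gives $\ell^1$ control, so instead I would interpolate: the case $p=1$ is handled directly since $\operatorname{tr}(T_\mu)=\int_\D K(\xi,\xi)\,\omega(\xi)\,d\mu(\xi)\asymp\int_\D\widehat{\mu}_\delta\,d\lambda_{\tau}$, the boundedness endpoint is Theorem \ref{TOPQ}(i) (which identifies $\|\widehat{\mu}_\delta\|_{L^\infty(d\lambda_{\tau})}$ with $\|T_\mu\|$), and complex interpolation of the family $\mu\mapsto T_\mu$ between these endpoints delivers all $1<p<\infty$; alternatively one argues by duality using the $S_p$--$S_{p'}$ trace pairing together with the boundedness of the Berezin transform from $S_{p'}$ into $L^{p'}(d\lambda_{\tau})$.

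I expect the hardest and most delicate step to be the necessity for $0<p<1$, where the lack of the diagonal/Schatten inequality forces the Khinchine-type averaging over random signs and a careful bookkeeping of the off-diagonal kernel interactions. The remaining inputs—sharp on- and off-diagonal kernel estimates, the Bessel bound for the lattice, and the slow variation of $\tau$—are exactly the ingredients supplied by the earlier sections, and the passage between the integral, sequential and Berezin formulations is routine once those are in hand.
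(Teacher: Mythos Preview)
Your outline rests on an input the paper does not supply and which, for this class of weights, is not known: pointwise off-diagonal decay of the reproducing kernel. You assume $|k_z(\xi)|^2\omega(\xi)$ is controlled by something like $e^{-c\,d_\tau(z,\xi)}\tau(z)^{-2}$, and you use this both to bound the Berezin transform above by a discrete convolution with the averaging function and to claim that each localized piece $T_{\mu_j}$ has uniform $S_p$ norm $\lesssim\widehat{\mu}_\delta(a_j)$ for $p<1$. But for $\omega\in\mathcal{W}$ the only kernel information available is the norm estimate $\|K_z\|_{A^2_\omega}^2\,\omega(z)\asymp\tau(z)^{-2}$ and the near-diagonal comparison $|K_z(\zeta)|\asymp\|K_z\|\,\|K_\zeta\|$ for $\zeta\in D(\delta\tau(z))$. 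No far off-diagonal decay (not even polynomial) is established; the paper notes explicitly that the Berezin-averaging equivalence you take for granted is only proved for $p\ge 1$, precisely because $L^p$ kernel estimates are \emph{not available} for $p<1$. So the route you propose would stall in both the sufficiency and the necessity arguments for $0<p<1$.

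The paper's way around this is to replace $k_a$ by explicitly constructed test functions $F_{a,N}$ (Lemma~C) that do have controlled polynomial decay $|F_{a,N}(z)|\,\omega(z)^{1/2}\lesssim(\min(\tau(a),\tau(z))/|z-a|)^{3N}$. For sufficiency with $p>1$ the paper bypasses interpolation and estimates $\sum_n\langle T_\mu e_n,e_n\rangle_\omega^p$ directly via the sub-mean-value inequality, Fubini, and H\"older, using only the global identity $\sum_n|e_n(\zeta)|^2\le\|K_\zeta\|^2\asymp\tau(\zeta)^{-2}\omega(\zeta)^{-1}$ and no pointwise kernel decay. For necessity with $0<p<1$ the paper does not use Khinchine/random signs but the deterministic Semmes--Luecking scheme: partition the lattice into finitely many $2^m\delta$-separated subsequences, form $T=B^*T_\nu B$ with $B$ built from the $F_{a_n,N}$, split $T=D+E$ into diagonal and off-diagonal parts, bound $\|D\|_{S_p}^p\gtrsim\sum_n\widehat{\mu}_\delta(a_n)^p$ from below using the near-diagonal behaviour of $F_{a_n,N}$, and bound $\|E\|_{S_p}^p\le C\,2^{-mp}\sum_n\widehat{\mu}_\delta(a_n)^p$ from above using the \emph{constructed} decay of $F_{a,N}$. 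Choosing $m$ large then absorbs $E$. If you want to carry out your program you will need either an independent proof of off-diagonal kernel decay for $\omega\in\mathcal{W}$, or to switch to the test functions $F_{a,N}$ as the paper does.
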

\mbox{}
\\

Throughout this work, the letter $C$ will denote an absolute constant whose value may change at different occurrences. We also use the notation
$a\lesssim b$ to indicate that there is a constant $C > 0$ with $a \leq C b,$ and the notation $a \asymp b$ means that $a \lesssim b$ and $b \lesssim a.$\\

The paper is organized as follows. In Section 2 we recall some notation
and preliminary results which will be used later.
 We prove Theorem \ref{TOPQ} on the boundedness and compactness of Toeplitz operators  in Section 3,  and finally in Section 6 we describe the membership of the Toeplitz operator in the Schatten ideals $S_ p(A^2_{\omega})$.

\section{Preliminaries and basic properties}

A positive function $\tau$ on $\D$ is said to be in the class $\mathcal{L}$ if satisfies the following two properties:
\begin{itemize}
\item[(A)] There is a constant $c_ 1$ such that $\tau(z)\le c_ 1\,(1-|z|)$ for all $z\in \D$;\\
\item[(B)] There is a constant $c_ 2$ such that $|\tau(z)-\tau(\zeta)|\le c_ 2\,|z-\zeta|$ for all $z,\zeta\in \D$.\\
\end{itemize}
 We also use the notation
 \[m_\tau : = \displaystyle \frac{\min(1, c_1^{-1}, c_2^{-1})}{4},\]
  where $c_1$ and $c_2$ are the constants appearing in the previous definition. For $a\in \D$ and $\delta>0$, we use  $D(\delta\tau(a))$ to denote the euclidian disc centered at $a$ and radius $\delta \tau(a)$.
It is easy to see from conditions (A) and (B) (see \cite[Lemma 2.1]{PP1}) that if $\tau \in\mathcal{L}$  and $ z \in  D(\delta\tau(a)) ,$ then
\begin{equation}\label{asymptau}
 \frac{1}{2}\,\tau(a)\leq \tau(z) \leq  2 \,\tau(a),
\end{equation}
 for sufficiently small $\delta > 0 ,$ that is, for $\delta \in (0, m_\tau).$ This fact will be used many times in this work.

\begin{Definition}
We say that a weight $\omega$ is in the class  $\mathcal{L}^*$ if it is of the form $\omega =e^{-2\varphi}$, where $\varphi \in C^2(\D)$ with $\Delta \varphi>0$, and $\big (\Delta \varphi (z) \big )^{-1/2}\asymp \tau(z)$, with $\tau(z)$ being a function in the class $\mathcal{L}$. Here $\Delta$ denotes the classical Laplace operator.
\end{Definition}
It is straightforward to see that $\mathcal{W}\subset \mathcal{L}^*$. The following result is from \cite[Lemma 2.2]{PP1} and  gives the boundedness of the point evaluation functionals on $A^2_{\omega}$.

\begin{otherl}\label{subHarmP}
Let $\omega \in \mathcal{L}^*$, $0<p<\infty$, and let $z\in \D$. If $\beta \in \mathbb{R}$ there exists $ M \geq 1$ such that
\[ |f(z)|^p \omega(z)^{\beta} \leq  \frac{M}{\delta^2\tau(z)^2}\int_{D(\delta\tau(z))}{|f(\xi)|^p \omega(\xi)^{\beta} \   dA(\xi)},\]
for all $ f \in H(\mathbb{D})$  and all $\delta > 0 $ sufficiently small.
\end{otherl}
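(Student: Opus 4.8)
The plan is to reduce this weighted pointwise estimate to the ordinary sub-mean-value property of subharmonic functions, after rescaling to unit scale and peeling off a holomorphic factor. The naive approach---bound $|f|^p$ by its average over $D(\delta\tau(z))$ and pull $\omega^\beta$ out as a constant---fails, because $\omega(\xi)^\beta=e^{-2\beta\varphi(\xi)}$ need not be comparable to $\omega(z)^\beta$ across the whole disc; for the prototype weights the oscillation of $\varphi$ over $D(\delta\tau(z))$ actually grows without bound as $|z|\to 1$. So I would not freeze the weight, but instead replace $\varphi$ on the disc by the real part of a holomorphic function and absorb it into $f$.

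First I would normalize the scale. Fix $\delta\in(0,m_\tau)$ small enough that $D(\delta\tau(z))\subset\D$ (possible since $\tau(z)\le c_1(1-|z|)$ by (A)), and set $\tilde f(w)=f(z+\tau(z)w)$ and $\tilde\varphi(w)=\varphi(z+\tau(z)w)$ for $|w|<\delta$. A direct computation gives $\Delta_w\tilde\varphi(w)=\tau(z)^2\,(\Delta\varphi)(z+\tau(z)w)$, and since $\omega\in\mathcal{L}^*$ means $(\Delta\varphi)(\xi)\asymp\tau(\xi)^{-2}$, combining this with the comparison $\tau(\xi)\asymp\tau(z)$ on $D(\delta\tau(z))$ from \eqref{asymptau} yields $\Delta_w\tilde\varphi\asymp 1$ on $|w|<\delta$, with constants independent of $z$.

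Next I would approximate $\tilde\varphi$ by a harmonic function. Let $h$ be the harmonic function on $D(0,\delta)$ with boundary values $\tilde\varphi$, so $v:=\tilde\varphi-h$ vanishes on $|w|=\delta$ and solves $\Delta v=\Delta\tilde\varphi$ with a uniformly bounded right-hand side. Representing $v$ through the Green function of the disc gives a uniform bound $|\tilde\varphi(w)-h(w)|\le C$ on $D(0,\delta)$, with $C$ depending only on $\delta$. As $D(0,\delta)$ is simply connected, I may write $h=\Re G$ with $G$ holomorphic there; then $F:=\tilde f\,e^{-(2\beta/p)G}$ is holomorphic on $D(0,\delta)$ and satisfies $|F(w)|^p=|\tilde f(w)|^p e^{-2\beta h(w)}\asymp |\tilde f(w)|^p e^{-2\beta\tilde\varphi(w)}$, the comparison coming exactly from the uniform bound on $\tilde\varphi-h$.

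Finally I would apply the sub-mean-value inequality to the subharmonic function $|F|^p$ on $D(0,\delta)$,
\[
|F(0)|^p\le \frac{1}{\delta^2}\int_{D(0,\delta)}|F(w)|^p\,dA(w).
\]
On the left, $|F(0)|^p=|f(z)|^p e^{-2\beta h(0)}\asymp |f(z)|^p e^{-2\beta\varphi(z)}=|f(z)|^p\omega(z)^\beta$; on the right, $|F(w)|^p\asymp|f(z+\tau(z)w)|^p\,\omega(z+\tau(z)w)^\beta$. Undoing the dilation $\xi=z+\tau(z)w$ (which supplies the factor $\tau(z)^{-2}$) converts this into the asserted estimate
\[
|f(z)|^p\omega(z)^\beta\le\frac{M}{\delta^2\tau(z)^2}\int_{D(\delta\tau(z))}|f(\xi)|^p\omega(\xi)^\beta\,dA(\xi),
\]
with $M$ absorbing the scale-invariant constants. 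The hard part is the uniform estimate $|\tilde\varphi-h|\le C$: this is precisely where the defining property $(\Delta\varphi)^{-1/2}\asymp\tau$ of the class $\mathcal{L}^*$, together with the comparability \eqref{asymptau} coming from (A) and (B), is used, and it is what forces $M$ to be independent of $z$ despite the large oscillation of the weight across the disc.
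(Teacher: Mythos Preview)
The paper does not prove this lemma; it is quoted from \cite[Lemma~2.2]{PP1}. Your argument is correct and is essentially the standard proof (and the one in the cited source): rescale so that $\Delta\tilde\varphi$ is uniformly bounded, solve the Dirichlet problem on the rescaled disc to obtain a harmonic $h$ with $|\tilde\varphi-h|\le C$ via the Green potential of a bounded Laplacian, write $h=\Re G$ and absorb $e^{-(2\beta/p)G}$ into $f$, and apply the sub-mean-value inequality to the subharmonic function $|F|^p$.
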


\mbox{}
\\
The following lemma on coverings is due to
Oleinik, see \cite{O}.
\begin{otherl}\label{Lcoverings}
Let $\tau$ be a positive function in $\D$ in the class
$\mathcal{L}$, and let $\delta\in (0,m_{\tau})$. Then there exists
a sequence of points $\{z_ j\}\subset \D$, such that the following
conditions are satisfied:
\begin{enumerate}
\item[$(i)$] \,$z_ j\notin D(\delta \tau(z_ k))$, \,$j\neq k$.

\item[$(ii)$] \, $\bigcup_ j D(\delta \tau(z_ j))=\D$.

\item[$(iii)$] \, $\tilde{D}(\delta \tau(z_ j))\subset
D(3\delta\tau(z_ j))$, where $\tilde{D}(\delta \tau(z_
j))=\bigcup_{z\in D(\delta \tau(z_ j))}D(\delta \tau(z))$,
$j=1,2,\dots$

\item[$(iv)$] \,$\big \{D(3\delta \tau(z_ j))\big \}$ is a
covering of $\D$ of finite multiplicity $N$.
\end{enumerate}
\end{otherl}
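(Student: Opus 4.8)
The plan is to build the sequence $\{z_j\}$ as a maximal $\delta$-separated net for the ``metric'' induced by $\tau$ and then read off the four properties, letting the comparability estimate \eqref{asymptau} do most of the work. First I would call a subset $E\subset\D$ \emph{admissible} if $|z-w|\ge\delta\max(\tau(z),\tau(w))$ for every pair of distinct points $z,w\in E$; equivalently, $z\notin D(\delta\tau(w))$ and $w\notin D(\delta\tau(z))$. An application of Zorn's lemma (or a direct exhaustion of $\D$ by compact sets, along which admissible sets are finite by the area bound used in the last step) produces a maximal admissible set $\{z_j\}$. This set is at most countable, and admissibility gives $|z_j-z_k|\ge\delta\max(\tau(z_j),\tau(z_k))\ge\delta\tau(z_k)$ for $j\neq k$, which is precisely condition $(i)$.

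Next I would derive the covering $(ii)$ from maximality. Given $w\in\D$, maximality prevents adjoining $w$, so there is an index $k$ with $|w-z_k|<\delta\max(\tau(w),\tau(z_k))$. If the maximum equals $\tau(z_k)$ this already gives $w\in D(\delta\tau(z_k))$; if instead $\tau(w)>\tau(z_k)$, then $z_k\in D(\delta\tau(w))$, and \eqref{asymptau} (valid since $\delta<m_\tau$) yields $\tau(w)\le 2\tau(z_k)$, whence $|w-z_k|<\delta\tau(w)\le 2\delta\tau(z_k)$, so $w$ lies in a fixed dilate of $D(\delta\tau(z_k))$. Property $(iii)$ needs no maximality at all: if $z\in D(\delta\tau(z_j))$ and $\xi\in D(\delta\tau(z))$, then $\tau(z)\le 2\tau(z_j)$ by \eqref{asymptau}, so $|\xi-z_j|\le|\xi-z|+|z-z_j|<\delta\tau(z)+\delta\tau(z_j)\le 3\delta\tau(z_j)$, which is exactly $\tilde D(\delta\tau(z_j))\subset D(3\delta\tau(z_j))$.

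The step I expect to be the main obstacle is the uniform finite multiplicity $(iv)$, which I would handle by an area-packing argument. Fix $x\in\D$ and set $J=\{j:x\in D(3\delta\tau(z_j))\}$. For $j\in J$, the Lipschitz bound $(B)$ together with $\delta<m_\tau\le(4c_2)^{-1}$ gives $|\tau(x)-\tau(z_j)|\le c_2|x-z_j|<3\delta c_2\,\tau(z_j)<\tfrac34\,\tau(z_j)$, so that $\tau(z_j)\asymp\tau(x)$ uniformly in $j\in J$. Consequently the separated points $\{z_j:j\in J\}$ satisfy $|z_j-z_k|\ge\delta\max(\tau(z_j),\tau(z_k))\gtrsim\delta\tau(x)$, so the Euclidean disks of radius $\tfrac{\delta}{C}\tau(x)$ centered at these $z_j$ are pairwise disjoint for a fixed $C$, while all of them sit inside a single disk centered at $x$ of radius comparable to $\delta\tau(x)$ (again because $\tau(z_j)\asymp\tau(x)$ and $|x-z_j|<3\delta\tau(z_j)$). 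Comparing areas bounds the number of elements of $J$ by an absolute constant $N$ independent of $x$ and of $\delta$, which is $(iv)$.

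The conceptual crux throughout is that \eqref{asymptau} lets one treat $\tau$ as essentially constant on each disk $D(\delta\tau(a))$, thereby converting the $\tau$-weighted separation and covering into ordinary Euclidean packing and net statements at a common scale. The only genuinely delicate point is matching the separation radius in $(i)$ with the covering radius in $(ii)$: the maximal net yields covering at the comparable scale rather than literally at radius $\delta$, and this factor-$2$ gap is reconciled using the comparability constant in \eqref{asymptau} together with the freedom to take $\delta$ arbitrarily small.
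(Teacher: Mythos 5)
The paper itself does not prove this lemma (it is quoted from Oleinik \cite{O}), so your proposal has to stand on its own, and it has a genuine gap at condition $(ii)$ --- one you flag yourself but do not actually repair. Maximality of your admissible set gives, for a point $w$ that cannot be adjoined, only the alternative $w\in D(\delta\tau(z_k))$ \emph{or} $z_k\in D(\delta\tau(w))$, and in the second case your estimate via \eqref{asymptau} yields only $w\in D(2\delta\tau(z_k))$: covering at scale $2\delta$, while the separation $(i)$ holds at scale $\delta$. The lemma asserts $(i)$ and $(ii)$ \emph{with the same} $\delta$, for every $\delta\in(0,m_\tau)$, and this cannot be recovered by ``taking $\delta$ arbitrarily small'': running your construction with $\delta/2$ strengthens the covering but weakens the separation in lockstep, so the factor $2$ never disappears. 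Nor does maximality silently give the missing containment: a configuration with $\tau(z_k)<\tau(w)$ and $\delta\tau(z_k)\le|w-z_k|<\delta\tau(w)$ blocks $w$ from being added to the admissible set without placing $w$ in any disk $D(\delta\tau(z_j))$, and nothing in the definition of admissibility excludes this. So what you have proved is the genuinely weaker statement with $D(2\delta\tau(z_j))$ in $(ii)$.

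The missing idea is to make the selection greedy in decreasing order of $\tau$ rather than by abstract maximality. Pick $z_1$ maximizing $\tau$ over $\D$ (the maximum is attained: $\tau$ is continuous by (B), and by (A) we have $\tau(z)\to 0$ as $|z|\to 1^-$); having chosen $z_1,\dots,z_n$, pick $z_{n+1}$ maximizing $\tau$ over the relatively closed set $\Omega_n=\D\setminus\bigcup_{j\le n}D(\delta\tau(z_j))$, stopping if $\Omega_n$ is empty. For $j<k$ one then has $z_k\notin D(\delta\tau(z_j))$ by construction \emph{and} $\tau(z_k)\le\tau(z_j)$ by the extremal choice, whence $|z_j-z_k|\ge\delta\tau(z_j)\ge\delta\tau(z_k)$; this is $(i)$ in full, with no case distinction. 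For $(ii)$: if some $w$ were never covered, then $w\in\Omega_n$ for every $n$, so $\tau(z_{n+1})\ge\tau(w)>0$ for all $n$; by (A) all the $z_n$ lie in the compact set $\{|z|\le 1-\tau(w)/c_1\}$, while by $(i)$ they are pairwise at distance at least $\delta\tau(w)$, which is impossible for an infinite sequence, so the process exhausts $\D$. Your arguments for $(iii)$ and $(iv)$ are correct as written --- $(iii)$ needs no property of the lattice and $(iv)$ uses only the separation $(i)$, which your points do satisfy --- so the repair is confined to the selection step.
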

The multiplicity $N$ in the previous Lemma  is independent of $\delta$, and it is easy to see that one can take, for example, $N=256$. Any sequence satisfying the conditions in Lemma \ref{Lcoverings} will be called a $(\delta,\tau)$-\textbf{lattice}. \\

\subsection{Reproducing kernels estimates and test functions}\label{sec2:1}
 The next result (see \cite{BDK,LR1,PP1} for (a), and  \cite[Lemma 3.6]{LR2} for part (b)) provides useful estimates involving reproducing kernels.

\begin{otherth}\label{RK-PE}
Let $K_z $ be the reproducing kernel of  $A^2_{\omega}$ where $\omega$ is a weight in  the class $\mathcal{W}$. Then
\begin{enumerate}
\item[(a)] For each $z\in \D$, one has
\begin{equation}\label{Eq-NE}
\|K_z\|_{A^2_{\omega}}^2 \, \omega(z)  \asymp \frac{1}{\tau(z)^2},\qquad z\in \D.
\end{equation}
\item[(b)] For all $\delta \in (0,m_{\tau})$ sufficiently small, one has
\begin{equation}\label{RK-Diag}
|K_ z(\zeta)| \asymp \|K_ z\|_{A^2_{\omega}}\cdot \|K_{\zeta}\|_{A^2_{\omega}} ,\qquad \zeta \in D(\delta \tau(z))
\end{equation}
\end{enumerate}
\end{otherth}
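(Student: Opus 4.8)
\emph{Proof strategy.} Throughout write $\omega=e^{-2\varphi}$ and recall the reproducing kernel Hilbert space identities $\|K_z\|_{A^2_\omega}^2=K_z(z)=\|L_z\|^2=\sup\{|f(z)|^2:\|f\|_{A^2_\omega}\le1\}$, so that part (a) is exactly a two-sided estimate for the norm of the point evaluation. The plan is to prove (a) first and then deduce the lower bound in (b) from (a) together with a local holomorphic approximation of the weight. For the upper bound in (a) I would simply invoke Lemma \ref{subHarmP} with $p=2$, $\beta=1$: for $\|f\|_{A^2_\omega}\le1$,
\[
|f(z)|^2\omega(z)\le\frac{M}{\delta^2\tau(z)^2}\int_{D(\delta\tau(z))}|f|^2\,\omega\,dA\le\frac{M}{\delta^2\tau(z)^2},
\]
and taking the supremum over such $f$ gives $\|K_z\|^2\omega(z)\lesssim\tau(z)^{-2}$. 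The reverse inequality is the substantial point: I would produce a test function, i.e. a holomorphic $f_z$ with $\|f_z\|_{A^2_\omega}\lesssim1$ and $|f_z(z)|^2\omega(z)\gtrsim\tau(z)^{-2}$. The natural ansatz is $f_z^0(w)=\tau(z)^{-1}e^{H_z(w)}$, where $H_z(w)=\varphi(z)+2\,\partial\varphi(z)(w-z)$ is the holomorphic first-order approximation of $\varphi$ at $z$, so that $\Re H_z(w)=\varphi(z)+\nabla\varphi(z)\cdot(w-z)$. Because $\Delta\varphi\asymp\tau^{-2}$ and, by the regularity built into the class $\mathcal{W}$, the Hessian of $\varphi$ is comparable to $\tau^{-2}$, the second-order Taylor remainder obeys $|\varphi(w)-\Re H_z(w)|\lesssim\delta^2$ on $D(\delta\tau(z))$; hence $|f_z^0|^2\omega\asymp\tau(z)^{-2}$ there, with the correct value at $z$. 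To turn this into a genuine element of $A^2_\omega$ I would cut off by a smooth $\chi$ equal to $1$ near $z$ and supported in a fixed dilate $D(R\tau(z))$, and solve $\bar\partial u=f_z^0\,\bar\partial\chi$ using Hörmander's weighted $L^2$ estimate (available since $\Delta\varphi>0$), choosing the solution with $u(z)=0$; then $f_z=\chi f_z^0-u$ has the required norm bound and value, because $\bar\partial\chi$ is supported away from $z$ and the correction $u$ is small. This $\bar\partial$-construction is the technical heart of (a).

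Part (b) splits as well. The upper bound is immediate from the reproducing property and Cauchy--Schwarz: $|K_z(\zeta)|=|\langle K_z,K_\zeta\rangle_\omega|\le\|K_z\|_{A^2_\omega}\|K_\zeta\|_{A^2_\omega}$, valid for all $z,\zeta$. For the lower bound the crucial observation is that $K_z(\zeta)$, as a holomorphic function of $\zeta$, carries the exponential growth $e^{\varphi(\zeta)}$: indeed, combining part (a) with \eqref{asymptau} shows the claim \eqref{RK-Diag} is equivalent to $|K_z(\zeta)|\,\omega(\zeta)^{1/2}\asymp\tau(z)^{-2}\omega(z)^{-1/2}$, a quantity that varies by the unbounded factor $e^{\varphi(z)-\varphi(\zeta)}$ across $D(\delta\tau(z))$. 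I would therefore factor out this growth with the same approximation $H_z$, setting $g(\zeta)=K_z(\zeta)\,e^{-H_z(\zeta)}$, which is holomorphic in $\zeta$ and satisfies $|g(\zeta)|\asymp|K_z(\zeta)|\,\omega(\zeta)^{1/2}$ on the disc since $|e^{H_z(\zeta)}|=e^{\Re H_z(\zeta)}\asymp e^{\varphi(\zeta)}$. Thus \eqref{RK-Diag} reduces to the growth-free statement $|g(\zeta)|\asymp|g(z)|$ for $\zeta\in D(\delta\tau(z))$.

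To establish this I would first bound $g$: Lemma \ref{subHarmP} (again $p=2$, $\beta=1$) with part (a) gives $|K_z(\zeta)|\,\omega(\zeta)^{1/2}\lesssim\tau(\zeta)^{-1}\|K_z\|_{A^2_\omega}\asymp\tau(z)^{-2}\omega(z)^{-1/2}\asymp|g(z)|$, uniformly on a fixed dilate of $D(\delta\tau(z))$, using \eqref{asymptau} and \eqref{Eq-NE}. A Cauchy estimate on a disc of radius comparable to $\tau(z)$ then yields $|g'(w)|\lesssim\tau(z)^{-1}|g(z)|$, and integrating along the segment from $z$ to $\zeta$, of length at most $\delta\tau(z)$, gives $|g(\zeta)-g(z)|\lesssim\delta\,|g(z)|$. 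Choosing $\delta$ small enough forces $|g(\zeta)|\ge(1-C\delta)|g(z)|\gtrsim|g(z)|$, which is precisely the lower bound in \eqref{RK-Diag}.

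The main obstacle in both parts is the same, and it is conceptual rather than computational: the weight $\varphi$ is genuinely non-harmonic and $\nabla\varphi$ is so large that the first-order variation of $\varphi$ over $D(\delta\tau(z))$ is unbounded as $|z|\to1^-$, so one cannot pretend $\omega$ is essentially constant on these discs. The remedy in every step is the local holomorphic approximation $H_z$, whose error is only $O(\delta^2)$ precisely because the regularity hypotheses defining $\mathcal{W}$ force the Hessian of $\varphi$ to be comparable to $\tau^{-2}=\Delta\varphi$. In part (a) this is what makes the Hörmander correction small and the test function effective, and in part (b) it is what turns $g$ into a bounded holomorphic function admitting a quantitative lower bound near its center; verifying the Hessian control and the $\bar\partial$-estimate in the weight $\omega$ is where the real work lies.
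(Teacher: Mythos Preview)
The paper itself gives no proof of this theorem: it is stated as a quoted result, with part (a) attributed to \cite{BDK,LR1,PP1} and part (b) to \cite[Lemma~3.6]{LR2}. So there is no in-paper argument to compare against; what you have written is a sketch of the proofs in those references, and it is essentially the correct one. The upper bound in (a) via Lemma~\ref{subHarmP} and the upper bound in (b) via Cauchy--Schwarz are exactly as in the literature, and your plan for the lower bound in (b) --- factor out the holomorphic first-order jet $e^{H_z}$, reduce to a bounded holomorphic $g$ with $|g|\lesssim |g(z)|$ on a dilate, then use a Cauchy estimate to get $|g(\zeta)-g(z)|\le C\delta\,|g(z)|$ --- is precisely the mechanism behind Lin--Rochberg's Lemma~3.6.

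Two remarks on the lower bound in (a). First, within the present paper you can shortcut the construction: Lemma~\ref{Borichevlemma} already hands you the test functions, so taking $f=F_{z,N}/\|F_{z,N}\|_{A^2_\omega}$ gives $|f(z)|^2\omega(z)\asymp\tau(z)^{-2}$ with $\|f\|=1$, which is the desired inequality (of course Lemma~\ref{Borichevlemma} is itself proved in \cite{PP1,BDK} by the H\"ormander scheme you outline). Second, the naive version of your cutoff argument does \emph{not} quite close: on the annulus $\operatorname{supp}\bar\partial\chi$ one still has $|f_z^0|^2\omega\asymp\tau(z)^{-2}$, the annulus has area $\asymp\tau(z)^2$, and the plain H\"ormander estimate in the weight $e^{-2\varphi}$ then only yields $\|u\|_{A^2_\omega}\lesssim1$, the same size as $\|\chi f_z^0\|$, so the correction need not be small. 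The fix in the cited papers is to solve $\bar\partial$ in the modified weight $e^{-2\varphi}|w-z|^{-2}$ (or a smoothed version), which both forces $u(z)=0$ and gains the missing smallness; your parenthetical ``choosing the solution with $u(z)=0$'' is pointing at exactly this device, but it is worth saying explicitly that the extra logarithmic term in the weight is what makes the estimate close. Finally, your claim that the Hessian of $\varphi$ is comparable to $\tau^{-2}$ does not follow from $\Delta\varphi\asymp\tau^{-2}$ alone; one needs the radiality of $\varphi$ together with the extra regularity hypotheses on $\tau$ built into the class $\mathcal{W}$ (specifically that $\tau'\to0$) to control $\varphi''$ and $\varphi'/r$ separately. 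You flag this as ``where the real work lies,'' which is accurate.
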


\mbox{}
\\
The following result on test functions obtained in \cite{PP1} is basic for our characterization of Schatten class Toeplitz operators.

\begin{otherl}\label{Borichevlemma}
Let $N\in\mathbb{N}\setminus\{0\}$  and $\omega\in \mathcal{W}$. Then, there is a number $\rho_
0\in (0,1)$ such that for each $a\in \D$ with $|a|\geq \rho_ 0$,
there is a function $F_{a,N}$ analytic in $\D$ with
\begin{equation}\label{BL1}
|F_{a,N}(z)|^2\,\omega(z)\asymp 1 \quad \textrm{ if }\quad
|z-a|<\tau(a),
\end{equation}
and
\begin{equation}\label{BL2}
|F_{a,N}(z)|\,\omega(z)^{1/2}\lesssim \min \left
(1,\frac{\min\big(\tau(a),\tau(z)\big)}{|z-a|} \right )^{3N}, \quad
z\in \D.
\end{equation}
Moreover, the function $F_{a,N}$  belongs to $A^2_{\omega}$ with $$\|F_{
a,N}\|_{A^2_{\omega}}\asymp \tau(a), \quad \rho_ 0 \le |a|<1.$$
\end{otherl}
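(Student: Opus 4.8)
The plan is to reduce the lemma to the construction of a single holomorphic function on $\D$ whose modulus imitates $e^{\varphi}$ on the disc $D(a,\tau(a))$, so as to produce \eqref{BL1}, and which decays like a prescribed power of $\min(\tau(a),\tau(z))/|z-a|$ away from $a$, so as to produce \eqref{BL2}. The engine will be H\"ormander's weighted $\bar\partial$-estimate, which applies precisely because $\omega=e^{-2\varphi}$ with $\Delta\varphi\asymp\tau^{-2}>0$; that is, $\varphi$ is strictly subharmonic with a quantitative lower bound on its Laplacian at the scale $\tau$. I would first build a local holomorphic model that realizes the peak, then cut it off and solve a $\bar\partial$-equation with a carefully chosen weight, correcting the cutoff into a genuinely holomorphic function that carries the required decay.

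For the peak I would use the second order Taylor expansion of the radial $C^2$ function $\varphi$ at the point $a$. Discarding the anti-holomorphic contributions, set $P_a(z)=\varphi(a)+2\,\partial\varphi(a)\,(z-a)+\partial^2\varphi(a)\,(z-a)^2$; a direct computation then gives $\varphi(z)-\Re P_a(z)=\tfrac14\Delta\varphi(a)\,|z-a|^2+o(|z-a|^2)$. Since $\Delta\varphi(a)\asymp\tau(a)^{-2}$, this difference is bounded above and below by absolute constants on $D(a,\tau(a))$, so that $|e^{P_a(z)}|^2\,\omega(z)=e^{-2(\varphi(z)-\Re P_a(z))}\asymp 1$ there. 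Thus $e^{P_a}$ already realizes the profile \eqref{BL1}, and the remaining task is to turn it into a function holomorphic on all of $\D$ that also decays.

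To globalize, I would fix a smooth cutoff $\chi$ equal to $1$ on $D(a,\tau(a))$ and supported nearby, put $G=\chi\,e^{P_a}$, and solve $\bar\partial u=\bar\partial G$ via H\"ormander's theorem with a weight of the form $2\varphi+\eta_{a,N}$, where $\eta_{a,N}$ is an auxiliary weight built from a multiple $\asymp N$ of $\log\!\big(1+|z-a|^2/\tau(a)^2\big)$, designed so that membership in the associated weighted $L^2$-space forces the solution to decay at the polynomial rate $3N$ away from $a$, while keeping the relevant weight subharmonic on the support of $\bar\partial G=e^{P_a}\,\bar\partial\chi$. Setting $F_{a,N}:=G-u$ yields a holomorphic function, and the two displayed bounds follow by transferring the weighted $L^2$ control of $u$ to pointwise estimates through the submean value inequality of Lemma~\ref{subHarmP}: near $a$ the correction $u$ is negligible against the peak, which gives \eqref{BL1}, while the global weighted estimate propagates into the decay \eqref{BL2}. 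Finally $\|F_{a,N}\|_{A^2_\omega}\asymp\tau(a)$ drops out, the lower bound from integrating \eqref{BL1} over $D(a,\tau(a))$ (whose area is $\asymp\tau(a)^2$) and the upper bound from integrating \eqref{BL2}, since $\int_{|z-a|>\tau(a)}(\tau(a)/|z-a|)^{6N}\,dA(z)\asymp\tau(a)^2$ whenever $N\ge 1$.

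The main obstacle is the calibration of the $\bar\partial$-step, where there is a genuine tension in the choice of $\eta_{a,N}$: forcing the correction to decay like $|z-a|^{-3N}$ pushes one towards a weight that grows away from $a$, yet such a weight typically fails to be subharmonic in a neighbourhood of $a$, exactly where H\"ormander's positivity hypothesis would be needed. Resolving this requires placing the transition region of $\chi$ far enough from $a$ — at a scale admissible only once $|a|\ge\rho_0$, with $\rho_0$ allowed to depend on $N$ — and either keeping the quadratic model $e^{P_a}$ under control there or replacing $P_a$ by a holomorphic approximation of $\varphi$ valid on a larger disc. The second delicate point is the passage from the $L^2$-bound furnished by H\"ormander to the pointwise statements \eqref{BL1} and \eqref{BL2}, which must be carried out so that neither the height of the peak nor the precise order of decay is lost.
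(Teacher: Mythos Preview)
The paper does not supply its own proof of this lemma: the result is quoted verbatim from \cite{PP1} (see the sentence immediately preceding the statement), so there is no in-paper argument to compare your attempt against. Your strategy---build the holomorphic quadratic peak $e^{P_a}$, cut it off, and correct via H\"ormander's weighted $\bar\partial$-estimate with the auxiliary weight $\eta_{a,N}\asymp 3N\log\bigl(1+|z-a|^2/\tau(a)^2\bigr)$---is precisely the construction carried out in \cite{PP1} (itself adapted from \cite{BDK}), so at the level of outline you are on the same track as the cited source.

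Two technical corrections to your discussion of obstacles. First, the subharmonicity worry is misplaced: the function $z\mapsto\log\bigl(1+|z-a|^2/\tau(a)^2\bigr)$ is subharmonic on all of $\mathbb{C}$ (its Laplacian is $4\tau(a)^{-2}\bigl(1+|z-a|^2/\tau(a)^2\bigr)^{-2}>0$), so $2\varphi+\eta_{a,N}$ is strictly subharmonic and H\"ormander applies with no further adjustment. The genuine constraint is the opposite of what you suggest: the Taylor estimate $\varphi(z)-\Re P_a(z)=O\bigl(|z-a|^2/\tau(a)^2\bigr)$ is only useful on $D(a,C\tau(a))$, so the cutoff must live at that scale, not far from $a$; pushing it outward would destroy control of $e^{P_a}$ on the support of $\bar\partial\chi$. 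Second, your scheme as written delivers decay governed by $\tau(a)/|z-a|$, whereas \eqref{BL2} demands the sharper numerator $\min\bigl(\tau(a),\tau(z)\bigr)$. Getting the $\tau(z)$ in the numerator when $\tau(z)<\tau(a)$ is an extra step that your outline does not address; in the cited construction it comes from converting the weighted $L^2$-bound on the correction $u$ into a pointwise bound via Lemma~\ref{subHarmP} at radius $\delta\tau(z)$, which naturally introduces the factor $\tau(z)$.
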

\mbox{}
\\

\subsection[Carleson type measures]{Carleson type measures}\label{sec1:4}

Let $\mu$ be a finite positive Borel measure on $\D$. We say that $\mu$ is  a Carleson measure for $A^2_{\omega} $ if there exists a finite positive constant  $C$  such that
\[
\int_{\D}{|f(z)|^2\ d\mu(z)}\leq C \|f\|^2_{A^2_{\omega}},
\]
for all $f \in A^2_{\omega}.$
Thus, $\mu$ is Carleson for $A^2_{\omega}$ when the inclusion
$I_\mu :A^2_{\omega}\rightarrow L^2(\D,\mu)$ is bounded.
Next  result was  proved in \cite{PP1}.

\begin{otherth}\label{cdecarl1}
Let $\omega\in\mathcal{W}$  and  $\mu$ be a finite positive Borel measure on $\D$.   Then
\begin{enumerate}
\item [(a)]
$I_\mu :A^2_{\omega}\rightarrow L^2(\D,\mu)$ is bounded if and only if  for each sufficiently small $\delta > 0$ we have
\begin{equation*}\label{uniq}
K_{\mu,\omega} :  =  \displaystyle\sup_{\substack{a\in\mathbb{D}}}\frac{1}{\tau(a)^{2}}\int_{D(\delta\tau(a))}{\omega(\xi)^{-1}  d\mu(\xi)} < \infty.
\end{equation*}
Moreover, in that case,  $K_{\mu,\omega} \asymp \|I_\mu\|^2_{A^2_{\omega}\rightarrow L^2(\D,\mu)}.$
\item[(b)] $I_\mu :A^2_{\omega}\rightarrow L^2(\D,\mu)$ is compact if and only if
 for each sufficiently small $\delta > 0$ we have
\begin{equation*}\label{uni}
 \lim\limits_{r\rightarrow1^-}\sup\limits_{\substack{|a| > r}}\frac{1}{\tau(a)^{2}}\int_{D(\delta\tau(a))}{\omega(\xi)^{-1}   d\mu(\xi)} = 0.
\end{equation*}
\end{enumerate}
\end{otherth}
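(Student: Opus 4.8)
The plan is to prove both equivalences by the standard interplay between test functions (for necessity) and a submean-value estimate combined with a finite-overlap covering (for sufficiency), organized around the auxiliary measure $d\nu=\omega^{-1}\,d\mu$, so that $K_{\mu,\omega}=\sup_{a}\widehat{\nu}_{\delta}(a)$ with $\widehat{\nu}_{\delta}(a)=\nu(D(\delta\tau(a)))/\tau(a)^2$. For the necessity in (a), I would fix $\delta\in(0,m_\tau)$ and test the Carleson inequality against the functions $F_{a,N}$ of Lemma \ref{Borichevlemma}. On $D(\delta\tau(a))$ one has $|z-a|<\tau(a)$, hence $|F_{a,N}(z)|^2\omega(z)\asymp 1$ by \eqref{BL1}, so for $|a|\ge\rho_0$,
\[
\int_{D(\delta\tau(a))}\omega^{-1}\,d\mu\asymp\int_{D(\delta\tau(a))}|F_{a,N}|^2\,d\mu\le\int_{\D}|F_{a,N}|^2\,d\mu\le\|I_\mu\|^2\,\|F_{a,N}\|^2_{A^2_\omega}\lesssim\|I_\mu\|^2\,\tau(a)^2,
\]
using $\|F_{a,N}\|_{A^2_\omega}\asymp\tau(a)$; dividing by $\tau(a)^2$ controls $\sup_{|a|\ge\rho_0}\widehat{\nu}_{\delta}(a)$. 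For $|a|<\rho_0$ the discs $D(\delta\tau(a))$ stay in a fixed compact subset of $\D$ on which $\omega^{-1}$ is bounded and $\tau$ is bounded below, so testing against $1\in A^2_\omega$ bounds this part by $\mu(\D)\lesssim\|I_\mu\|^2$. Hence $K_{\mu,\omega}\lesssim\|I_\mu\|^2$.

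For the sufficiency in (a), fix a $(\delta,\tau)$-lattice $\{z_j\}$ from Lemma \ref{Lcoverings}. For $f\in A^2_\omega$ and $z\in D(\delta\tau(z_j))$, Lemma \ref{subHarmP} with $p=2$ and $\beta=1$, together with $\tau(z)\asymp\tau(z_j)$ from \eqref{asymptau} and the inclusion $D(\delta\tau(z))\subset D(3\delta\tau(z_j))$, gives the pointwise bound
\[
|f(z)|^2\,\omega(z)\lesssim\frac{1}{\tau(z)^2}\int_{D(\delta\tau(z))}|f|^2\,\omega\,dA\lesssim\frac{1}{\tau(z_j)^2}\int_{D(3\delta\tau(z_j))}|f|^2\,\omega\,dA.
\]
Writing $|f|^2\,d\mu=(|f|^2\omega)\,(\omega^{-1}d\mu)$ and using the hypothesis $\int_{D(\delta\tau(z_j))}\omega^{-1}d\mu\le K_{\mu,\omega}\,\tau(z_j)^2$, I obtain $\int_{D(\delta\tau(z_j))}|f|^2d\mu\lesssim K_{\mu,\omega}\int_{D(3\delta\tau(z_j))}|f|^2\omega\,dA$. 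Summing over $j$, using $\D=\bigcup_j D(\delta\tau(z_j))$ on the left and the finite multiplicity $N$ of $\{D(3\delta\tau(z_j))\}$ on the right, yields $\int_{\D}|f|^2d\mu\lesssim N\,K_{\mu,\omega}\,\|f\|^2_{A^2_\omega}$, i.e. $\|I_\mu\|^2\lesssim K_{\mu,\omega}$; combined with the previous paragraph this gives the norm equivalence.

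For (b), the vanishing condition follows from compactness by testing against the normalized functions $f_a=F_{a,N}/\|F_{a,N}\|_{A^2_\omega}$: these are norm-bounded and, by \eqref{BL2} and $\|F_{a,N}\|_{A^2_\omega}\asymp\tau(a)$, tend to $0$ uniformly on compact subsets as $|a|\to1^-$, hence weakly in $A^2_\omega$; compactness of $I_\mu$ then forces $\|I_\mu f_a\|_{L^2(\mu)}\to0$, while the necessity estimate restricted to $D(\delta\tau(a))$ shows $\widehat{\nu}_{\delta}(a)\lesssim\|I_\mu f_a\|^2_{L^2(\mu)}\to0$. Conversely, the vanishing condition already forces $K_{\mu,\omega}<\infty$, so $I_\mu$ is bounded by (a); to prove compactness I would take any norm-bounded $f_n\to0$ uniformly on compacta and split $\int_{\D}|f_n|^2d\mu$ over $\{|z|\le r\}$ and $\{|z|>r\}$. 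The first piece tends to $0$ for each fixed $r$ since $\mu$ is finite and $f_n\to0$ uniformly there; the second is estimated exactly as in the sufficiency argument but summing only over lattice discs meeting $\{|z|>r\}$, whose centers satisfy $|z_j|>r'$ with $r'\to1$ as $r\to1$, giving a bound $\lesssim\big(\sup_{|a|>r'}\widehat{\nu}_{\delta}(a)\big)\|f_n\|^2_{A^2_\omega}$ that is uniformly small. A standard $\eps/2$ argument then yields $\|I_\mu f_n\|_{L^2(\mu)}\to0$.

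I expect the main obstacle to lie in the sufficiency bookkeeping: matching the weight powers through the identity $|f|^2d\mu=(|f|^2\omega)(\omega^{-1}d\mu)$, transferring the submean-value estimate across comparable discs (the radius inclusion $D(\delta\tau(z))\subset D(3\delta\tau(z_j))$ and the comparability $\tau(z)\asymp\tau(z_j)$), and controlling the resulting sum of enlarged discs via the finite multiplicity $N$. The test-function direction is comparatively routine given Lemma \ref{Borichevlemma}, the only care being the separate treatment of small $|a|$ and the verification that the normalized test functions converge weakly to zero.
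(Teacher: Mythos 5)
Your proof is correct. Note that the paper does not actually prove Theorem \ref{cdecarl1} --- it is stated there as a known result quoted from \cite{PP1} --- and your argument (the test functions $F_{a,N}$ of Lemma \ref{Borichevlemma} for necessity, the submean-value estimate of Lemma \ref{subHarmP} combined with the finite-multiplicity covering of Lemma \ref{Lcoverings} for sufficiency, and weakly null normalized test functions together with a compact/tail splitting for part (b)) is essentially the same standard route followed in that reference.
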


\mbox{}
\\
\begin{lemma}\label{LF1}
Let $\omega \in \mathcal{W}$, and assume that $\widehat{\mu}_{\delta}$ is in $L^{\infty}(\D)$ for some small $\delta>0$. Then
\[
\langle T_{\mu} f, g\rangle _{\omega}=\int_{\D} f(\zeta)\,\overline{g(\zeta)}\,\omega(\zeta)\,d\mu(\zeta), \qquad f,g\in A^2_{\omega}.
\]
\end{lemma}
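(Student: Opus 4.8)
The plan is to first convert the hypothesis into a Carleson condition and then to \emph{avoid} a direct appeal to Fubini's theorem, which is the delicate point, by representing the form abstractly. First I would observe that since $\omega$ is continuous on $\D$ with $\omega\to 0$ at the boundary it is bounded, so together with the finiteness of $\mu$ the measure $d\nu:=\omega\,d\mu$ is a finite positive Borel measure. Its averaging function in the sense of Theorem \ref{cdecarl1} coincides with $\widehat{\mu}_{\delta}$, since
\[
\frac{1}{\tau(a)^2}\int_{D(\delta\tau(a))}\omega(\xi)^{-1}\,d\nu(\xi)=\frac{\mu\big(D(\delta\tau(a))\big)}{\tau(a)^2}=\widehat{\mu}_{\delta}(a).
\]
Hence the assumption $\widehat{\mu}_{\delta}\in L^{\infty}(\D)$ says exactly that $\nu=\omega\,d\mu$ is a Carleson measure for $A^2_{\omega}$, so by Theorem \ref{cdecarl1}(a) the inclusion $A^2_{\omega}\hookrightarrow L^2(\D,\omega\,d\mu)$ is bounded; that is, $\int_{\D}|f|^2\,\omega\,d\mu\lesssim\|f\|_{A^2_{\omega}}^2$ for every $f\in A^2_{\omega}$.

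Two consequences follow at once. For fixed $z\in\D$ the defining integral for $T_\mu f(z)$ converges absolutely, since by Cauchy--Schwarz in $L^2(\omega\,d\mu)$,
\[
\int_{\D}|f(\xi)|\,|K_z(\xi)|\,\omega(\xi)\,d\mu(\xi)\le\Big(\int_{\D}|f|^2\omega\,d\mu\Big)^{1/2}\Big(\int_{\D}|K_z|^2\omega\,d\mu\Big)^{1/2}\lesssim\|f\|_{A^2_{\omega}}\,\|K_z\|_{A^2_{\omega}}<\infty.
\]
Moreover the sesquilinear form $S(f,g):=\int_{\D} f\,\overline{g}\,\omega\,d\mu$ is well defined and bounded on $A^2_{\omega}\times A^2_{\omega}$, because $|S(f,g)|\le\|f\|_{L^2(\omega\,d\mu)}\|g\|_{L^2(\omega\,d\mu)}\lesssim\|f\|_{A^2_{\omega}}\|g\|_{A^2_{\omega}}$.

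The key step, replacing a direct use of Fubini, is a Riesz--representation argument. For fixed $f\in A^2_{\omega}$ the map $g\mapsto\overline{S(f,g)}$ is a bounded linear functional on $A^2_{\omega}$, so there is a unique $\Lambda f\in A^2_{\omega}$ with $S(f,g)=\langle\Lambda f,g\rangle_{\omega}$ for all $g$, and $f\mapsto\Lambda f$ is bounded. I would then identify $\Lambda$ with $T_\mu$ by testing against reproducing kernels: applying the reproducing property $h(z)=\langle h,K_z\rangle_{\omega}$ to $h=\Lambda f\in A^2_{\omega}$,
\[
(\Lambda f)(z)=\langle\Lambda f,K_z\rangle_{\omega}=S(f,K_z)=\int_{\D} f(\xi)\,\overline{K_z(\xi)}\,\omega(\xi)\,d\mu(\xi)=T_\mu f(z),
\]
the last equality being the definition of $T_\mu$, legitimate because that integral converges by the previous step. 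Thus $T_\mu f=\Lambda f\in A^2_{\omega}$ and, for every $g\in A^2_{\omega}$, one gets $\langle T_\mu f,g\rangle_{\omega}=\langle\Lambda f,g\rangle_{\omega}=S(f,g)=\int_{\D} f(\zeta)\,\overline{g(\zeta)}\,\omega(\zeta)\,d\mu(\zeta)$, which is the claim.

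The main obstacle to watch for is exactly the temptation to prove the statement by interchanging the order of integration in $\langle T_\mu f,g\rangle_{\omega}=\int_{\D}\big(\int_{\D} f\,\overline{K_z}\,\omega\,d\mu\big)\overline{g}\,\omega\,dA$. Absolute integrability of the resulting double integral is not clear from the tools at hand, because the only available control on $\int_{\D}|K_z(\xi)|\,\omega(z)\,dA(z)$ passes through $\|K_z\|_{A^2_{\omega}}\asymp\tau(z)^{-1}\omega(z)^{-1/2}$, and the factor $\tau^{-1}$ need not be integrable against a general finite measure. The representation route above sidesteps this difficulty entirely, relying only on the Carleson estimate and the reproducing property; the remaining verifications (boundedness of $\omega$, finiteness of $\nu$, and the elementary identification of the averaging function) are routine.
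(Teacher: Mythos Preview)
Your proof is correct and takes a genuinely different route from the paper's. The paper does apply Fubini directly, but circumvents the integrability issue you flag by first reducing to holomorphic polynomials $g$ (dense in $A^2_{\omega}$ since $\omega$ is radial): with $\|g\|_{\infty}<\infty$ one bounds
\[
\int_{\D}\Big(\int_{\D}|f|\,|K_z|\,\omega\,d\mu\Big)|g(z)|\,\omega(z)\,dA(z)\lesssim \|f\|_{A^2_{\omega}}\|g\|_{\infty}\int_{\D}\|K_z\|_{A^2_{\omega}}\,\omega(z)\,dA(z)\asymp\|f\|_{A^2_{\omega}}\|g\|_{\infty}\int_{\D}\frac{\omega(z)^{1/2}}{\tau(z)}\,dA(z),
\]
and the last integral is known to be finite (Lemma 2.3 in \cite{PP1}). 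Your Riesz-representation argument instead recognizes $T_\mu$ as the bounded operator representing the Carleson-bounded form $S(f,g)=\int f\overline{g}\,\omega\,d\mu$, and identifies it pointwise via $(\Lambda f)(z)=S(f,K_z)$. This buys you two things: you avoid both the polynomial density step and the auxiliary integral estimate from \cite{PP1}, and you get $T_\mu f\in A^2_{\omega}$ (hence boundedness of $T_\mu$) as an immediate byproduct. The paper's approach, on the other hand, makes the interchange of integrals explicit, which some readers may find more transparent. Your claim that Fubini ``is not clear from the tools at hand'' is slightly overstated---the paper's reduction to polynomials together with the cited estimate does the job---but your alternative is self-contained and arguably cleaner.
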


\begin{proof}
Being $\omega$ a radial weight, the polynomials are dense in $A^2_{\omega}$ and we may assume that $g$ is an holomorphic polynomial. Because of Theorem \ref{cdecarl1}, the condition implies that $d\nu=\omega \,d\mu$ is a Carleson measure for $A^2_{\omega}$. Then
\[
\begin{split}
\int_{\D} &\left ( \int_{\D} |f(\zeta)|\,|K_ z(\zeta)|\,\omega(\zeta) \,d\mu(\zeta)\right ) |g(z)|\,\omega(z) \,dA(z)\\
& \le \|f\|_{L^2(\nu)} \int_{\D} \|K_ z\|_{L^2(\nu)} \, |g(z)|\,\omega(z) \,dA(z)
\\
& \lesssim \|f\|_{A^2_{\omega}} \cdot \|g\|_{\infty} \int_{\D} \|K_ z\|_{A^2_{\omega}}\,\omega(z) \,dA(z)
\\
& \lesssim \|f\|_{A^2_{\omega}} \cdot \|g\|_{\infty} \int_{\D} \frac{\omega(z)^{1/2}}{\tau(z)} \,dA(z),
\end{split}
\]
and this is finite (see \cite[Lemma 2.3]{PP1} for example). Thus, Fubini's theorem gives
\[
\begin{split}
\langle T_{\mu} f, g\rangle _{\omega}&=\int_{\D} \left ( \int_{\D} f(\zeta)\,\overline{K_ z(\zeta)}\,\omega(\zeta) \,d\mu(\zeta)\right ) \overline{g(z)}\,\omega(z) \,dA(z)\\
\\
&= \int_{\D} f(\zeta) \left (\int_{\D} \overline{g(z)}\, K_{\zeta}(z) \,\omega(z) \,dA(z) \right) \,\omega(\zeta)\,d\mu(\zeta)\\
\\
&=\int_{\D} f(\zeta) \,\overline{ \langle g, K_{\zeta} \rangle _{\omega}} \,\omega(\zeta)\,d\mu(\zeta)=\int_{\D} f(\zeta) \,\overline{g(\zeta)} \,\omega(\zeta)\,d\mu(\zeta).
\end{split}
\]
\end{proof}

\section{Proof of Theorem \ref{TOPQ}}\label{sec 4:2}

Recall that, for $\delta\in(0,m_\tau),$  the averaging function $\widehat{\mu}_{\delta}$ is defined  on $\D$  by
\[
\widehat{\mu}_{\delta}(z) : = \frac{\mu(D(\delta\tau(z))}{\tau(z)^2},\qquad z\in \D.
\]
\subsection{Boundedness}
Assume first  that $ T_\mu$   is bounded on $A^2_{\omega}$. For  fixed $a \in \D,$  one has
\[
T_\mu K_a(a) = \int_{\D}{|K_a(z)|^2 \, \omega(z) \ d\mu(z)}.
\]
By \eqref{RK-Diag}, there is $\delta\in (0,m_{\tau})$ such that $|K_a(z)|\asymp\|K_z\|_{A^2_{\omega}}\cdot \|K_a\|_{A^2_{\omega}},$  for every $z \in D(\delta\tau(a))$. This together with  the norm estimate given in \eqref{Eq-NE} and the fact that $\tau(z)\asymp \tau(a)$ for $z\in D(\delta \tau(a))$ gives
\begin{equation}\label{Eq-Tm1}
\begin{split}
T_\mu K_a(a) &\geq \int_{D(\delta\tau(a))}|K_a(z)|^2\, \omega(z) \,d\mu(z) \\
\\
&\gtrsim \int_{D(\delta\tau(a))} \|K_z\|^2_{A^2_{\omega}}\,\|K_a\|^2_{A^2_{\omega}} \,\omega(z) \,d\mu(z)\\
\\
&\asymp \frac{\mu(D(\delta\tau(a))}{\omega(a)\,\tau(a)^4} =\frac{\widehat{\mu}_{\delta}(a)}{\omega(a)\,\tau(a)^2}.
\end{split}
\end{equation}
Therefore, by Lemma \ref{subHarmP} and the estimate of the norm of the reproducing kernels, we obtain

\begin{equation}\label{TOPQ-1}
\begin{split}
\widehat{\mu}_{\delta}(a)
&\lesssim \omega(a)\, \tau(a)^{2}\, |T_\mu K_a(a)| \leq \omega(a)^{1/2}\,\tau(a) \,\,\| T_\mu K_a\|_{A^2_{\omega}}\\
\\
&\leq\omega(a)^{1/2}\,\tau(a)\,\, \|T_\mu\| \cdot \|K_a\|_{A^2_{\omega}}
\lesssim \|T_\mu\|.
\end{split}
\end{equation}
\mbox{}
\\
Conversely, suppose that \eqref{tauomega1} holds. Let $f,g\in A^2_{\omega}$. By Lemma \ref{LF1} and because $d\nu=\omega d\mu$ is a Carleson measure for $A^2_{\omega}$ with $\|I_{\nu}\|^2\asymp C_{\mu}:=\sup_{z\in \D} \widehat{\mu}_{\delta}(z)$  (see Theorem \ref{cdecarl1}), we have
\[
\big | \langle T_{\mu} f, g\rangle _{\omega} \big | \le \int_{\D} |f(z)|\,|g(z)|\,d\nu(z) \le \|f\|_{L^2(\nu)}\cdot \|g\|_{L^2(\nu)}\lesssim C_{\mu} \|f\|_{A^2_{\omega}}\cdot \|g\|_{A^2_{\omega}}.
\]
 This shows that $T_{\mu}$ is bounded on $A^2_{\omega}$ with $\|T_{\mu}\|\lesssim C_{\mu}$ finishing the proof.

\subsection{Compactness}

Let $k_{z}$ be the normalized reproducing kernels in $A^2_{\omega}$. From \eqref{TOPQ-1} in the proof of the boundedness part, and the estimate for  $\|K_ z\|_{A^2_{\omega}}$, we have
\[
\widehat{\mu}_{\delta}(z)\lesssim \|T_{\mu} k_{z}\|_{A^2_{\omega}}.
\]
 From Lemma \ref{subHarmP}, it is easy to see that $k_{z}$ converges to zero weakly  as $|z|\rightarrow 1^{-}$. Thus, if $T_{\mu}$ is compact, from Theorem 1.14 in \cite{Zhu}, we obtain \eqref{CompCond}.\\

Conversely, suppose \eqref{CompCond} holds, and let $\lbrace f_n\rbrace$  be a  sequence in $A^2_{\omega}$ converging to zero weakly. To prove compactness, we must show that $\|T_\mu f_n\|_{ A^2 _{\omega}}\rightarrow 0$. By the proof of the boundedness, we have
\[
\|T_\mu f_n\|_{ A^2 _{\omega}}\lesssim  \|f_n\|_{L^2( \nu)},
\]
with $d\nu := \omega d\mu. $ By Theorem \ref{cdecarl1}, our assumption \eqref{CompCond} implies that  $I_{\nu}: A^2_{\omega}\longrightarrow  L^2(\D, d\nu) $ is compact, which implies that $\|f_n\|_{L^2( \nu)}$ tends to zero.
Hence $\|T_\mu f_ n \|_{A^2_{\omega}}\rightarrow 0$ proving that $T_{\mu}$ is compact. The proof is complete.

\section[Schatten classes]{Membership in Schatten classes}\label{sec3:4}

In this section, we are going to describe those positive Borel measures $\mu$ for which the Toeplitz operator $T_{\mu}$ belongs to the the Schatten ideal $S_ p(A^2_{\omega})$, for weights $\omega \in \mathcal{W}$. In order to obtain such a characterization, we need to introduce first some concepts. \\

We  define the  $\omega$-Berezin transform $B_{\omega}\mu$ of the measure $\mu$  as
\[
B_{\omega}\mu(z) : = \int_{\D}{ |k_z(\xi)|^2\,\omega(\xi)\, d\mu(\xi)},\qquad z\in \D,
\]
where $k_ z$ are the normalized reproducing kernels in $A^2_{\omega}$. We also recall that, for $\delta\in (0,m_\tau),$  the averaging function $\widehat{\mu}_{\delta}$ is given  by
\[
 \widehat{\mu}_{\delta}(z): = \frac{\mu(D(\delta \tau(z))}{ \tau(z)^2}, \qquad z\in \D.
\]
We also consider the measure $\lambda_{\tau}$ given by
\[
d\lambda_{\tau}(z)=\frac{dA(z)}{\tau(z)^2},\qquad z\in \D.
\]
\begin{proposition}\label{SchatC}
Let $ 1 \le p<\infty,$ and $\omega \in \mathcal{W}$.   The following conditions are equivalent :
\begin{enumerate}
\item[$(a)$] The function $B_{\omega}\mu$ is in $L^p(\D,d\lambda_{\tau}).$
\item[$(b)$] The function $\widehat{\mu}_{\delta}$ is in $L^p(\D,d\lambda_{\tau})$ for any $\delta\in (0,m_{\tau})$ small enough.
\item[$(c)$] The sequence $\lbrace\widehat{\mu}_{\delta}(z_n)\rbrace$ is in $\ell^p$ for any $(\delta,\tau)$-lattice $\{z_ n\}$ with $\delta\in (0,\frac{m_{\tau}}{4})$ sufficiently small.
\end{enumerate}
\end{proposition}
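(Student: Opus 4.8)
The plan is to prove the chain of equivalences $(b)\Leftrightarrow(c)$ and $(a)\Leftrightarrow(b)$, exploiting the fact that all three quantities are essentially comparable once we pass to a lattice. The most mechanical equivalence is $(b)\Leftrightarrow(c)$, so I would start there. First I would fix a $(\delta,\tau)$-lattice $\{z_n\}$ as provided by Lemma \ref{Lcoverings}. For the implication $(b)\Rightarrow(c)$, the key observation is that by \eqref{asymptau} one has $\tau(z)\asymp\tau(z_n)$ on each disc $D(\delta\tau(z_n))$, and moreover $\widehat{\mu}_{\delta}(z_n)\lesssim \widehat{\mu}_{c\delta}(z)$ for $z$ in a slightly smaller disc around $z_n$ (since $D(\delta\tau(z_n))\subset D(c\delta\tau(z))$ for suitable $c$). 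Integrating the $p$-th power over $D(\delta\tau(z_n))$ against $d\lambda_\tau$ gives a factor $\asymp 1$ (because $\lambda_\tau(D(\delta\tau(z_n)))\asymp 1$), so summing over $n$ and using that the discs are disjoint bounds $\sum_n \widehat{\mu}_{\delta}(z_n)^p$ by $\|\widehat{\mu}_{c\delta}\|^p_{L^p(\lambda_\tau)}$, which in turn is comparable to $\|\widehat{\mu}_{\delta}\|^p_{L^p(\lambda_\tau)}$. For $(c)\Rightarrow(b)$ I would run the reverse estimate: cover $\D$ by the dilated discs $D(3\delta\tau(z_n))$ of finite multiplicity $N$, dominate $\widehat{\mu}_{\delta}(z)$ pointwise on $D(3\delta\tau(z_n))$ by a constant times $\widehat{\mu}_{C\delta}(z_n)$, and integrate, with the finite multiplicity $N$ absorbing the overlaps.

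Next I would treat $(a)\Leftrightarrow(b)$. For $(b)\Rightarrow(a)$, the idea is to bound the Berezin transform pointwise by an average of $\widehat{\mu}_\delta$. Writing $B_\omega\mu(z)=\int_{\D}|k_z(\xi)|^2\omega(\xi)\,d\mu(\xi)$, I would split the integral into the near region $D(\delta\tau(z))$ and the complementary annular pieces, using the reproducing kernel estimates. On the near region, \eqref{RK-Diag} and \eqref{Eq-NE} give $|k_z(\xi)|^2\omega(\xi)\asymp \tau(z)^{-2}$, contributing exactly $\widehat{\mu}_\delta(z)$. On the far pieces, the off-diagonal decay of the kernel—obtainable from the test-function estimate \eqref{BL2} or standard kernel bounds for these weights—yields a geometric decay in the lattice distance, so that $B_\omega\mu(z)\lesssim \sum_n 2^{-\sigma d(z,z_n)}\widehat{\mu}_{\delta}(z_n)$ for some $\sigma>0$ and some notion of lattice distance $d$. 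This is where the condition $p\ge 1$ enters decisively: the map $\{a_n\}\mapsto \sum_n 2^{-\sigma d(z,z_n)}a_n$ is bounded on the relevant $L^p$-type spaces by a Schur-test / Young-inequality argument, and then $(c)$ (equivalently $(b)$) gives $\|B_\omega\mu\|_{L^p(\lambda_\tau)}\lesssim \|\widehat{\mu}_\delta\|_{L^p(\lambda_\tau)}$. For the converse $(a)\Rightarrow(b)$, the clean inequality is the pointwise domination $\widehat{\mu}_\delta(z)\lesssim B_\omega\mu(z)$: indeed restricting the defining integral of $B_\omega\mu$ to $D(\delta\tau(z))$ and using $|k_z(\xi)|^2\omega(\xi)\gtrsim\tau(z)^{-2}$ there recovers $\widehat{\mu}_\delta(z)$ up to a constant, and integrating in $d\lambda_\tau$ finishes this direction with no summation needed.

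The main obstacle I anticipate is the far-region estimate in $(b)\Rightarrow(a)$: making the off-diagonal decay of $|k_z(\xi)|$ precise and then converting the resulting convolution-type bound into an $L^p(\lambda_\tau)$ estimate via a Schur test. Two points require care. First, I must quantify the kernel decay; the natural route is to use \eqref{BL2} together with the diagonal estimate \eqref{Eq-NE} to show $|K_z(\xi)|^2\omega(z)\omega(\xi)\lesssim \tau(z)^{-2}\tau(\xi)^{-2}\min(1,\min(\tau(z),\tau(\xi))/|z-\xi|)^{M}$ for large $M$, so that the decay exponent can be taken as large as needed. Second, the Schur test needs the kernel $S(z,w)=2^{-\sigma d(z,w)}$ (discretized on the lattice) to satisfy $\sup_z\int S(z,w)\,d\lambda_\tau(w)<\infty$ and symmetrically; this reduces to summing a geometric series over lattice shells, whose cardinalities grow only polynomially while $S$ decays geometrically, so the sums converge once $\sigma$ (equivalently $M$) is large. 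I would organize these estimates as a short lemma on the lattice so that the Schur/Young argument for $p\ge 1$ can be quoted cleanly, and I expect that the symmetry $K_z(\xi)=\overline{K_\xi(z)}$ makes the two Schur conditions identical by a change of variables.
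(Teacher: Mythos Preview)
Your treatment of $(b)\Leftrightarrow(c)$ and $(a)\Rightarrow(b)$ is fine and matches the paper. The gap is in $(b)\Rightarrow(a)$: you rely on a pointwise off-diagonal decay estimate for the reproducing kernel $K_z$, but no such estimate is available here. Inequality \eqref{BL2} concerns the auxiliary test functions $F_{a,N}$, not the kernel; there is no stated relation of the form $|K_z(\xi)|\lesssim |F_{z,N}(\xi)|$ away from the diagonal, and for this class of rapidly decreasing weights such kernel bounds are genuinely open (the paper itself remarks after the Proposition that even $L^p$-integral estimates for $K_z$ ``are not available nowadays''). So the Schur/Young scheme you outline cannot be fed the input it needs.

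The paper circumvents the missing kernel decay entirely and proves $(c)\Rightarrow(a)$ instead. Starting from $B_\omega\mu(z)\le \|K_z\|^{-2}\sum_n\int_{D(\delta\tau(z_n))}|K_z(s)|^2\omega(s)\,d\mu(s)$, it uses Lemma~\ref{subHarmP} to replace the pointwise value $|K_z(s)|^2\omega(s)$ on each small disc by an average over $D(3\delta\tau(z_n))$, which separates $\widehat{\mu}_\delta(z_n)$ from an integral of $|K_z(\xi)|^2\omega(\xi)$. H\"older's inequality (this is where $p\ge 1$ enters) then collapses the sum to a single integral, and the crucial step is that after integrating in $z$ one can use only the \emph{norm} of the kernel: since $\|K_z\|^{-2}\asymp\tau(z)^2\omega(z)$, one has $\int_\D |K_\xi(z)|^2\|K_z\|^{-2}\,d\lambda_\tau(z)\asymp\|K_\xi\|^2$. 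Thus the argument needs nothing beyond \eqref{Eq-NE} and the reproducing property, never any off-diagonal control of $K_z$. You should reorganize your $(b)\Rightarrow(a)$ along these lines.
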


\begin{proof}

$(a)\Rightarrow (b)$. By Theorem \ref{RK-PE}, for all $\delta \in (0,m_{\tau})$ sufficiently small, one has
\begin{equation*}
|K_ z(\zeta)| \asymp \|K_ z\|_{A^2(\omega)}\cdot \|K_{\zeta}\|_{A^2(\omega)} ,\qquad \zeta \in D(\delta \tau(z))
\end{equation*}
Then
\begin{displaymath}\label{MuC}
\begin{split}
B_{\omega}\mu(z)&= \int_{\D}{|k_z(\zeta)|^2 \,\omega(\zeta)\,d\mu(\zeta)}\ge \|K_ z\|_{A^2_{\omega}}^{-2}\int_{D(\delta \tau(z))}{|K_z(\zeta)|^2 \,\omega(\zeta)\,d\mu(\zeta)}\\
\\
& \asymp \int_{D(\delta \tau(z))}{\|K_ {\zeta}\|_{A^2_{\omega}}^2 \,\omega(\zeta)\,d\mu(\zeta)}\asymp  \widehat{\mu}_{\delta}(z) .
\end{split}
\end{displaymath}
Since $B_{\omega}\mu$ is in $L^p(\D,d\lambda_{\tau}),$ this gives (b). \\
\\
$(b)\Rightarrow(c)$. Since
$\widehat{\mu}_{\delta}(z_n) \lesssim \widehat{\mu}_{4\delta}(z),$ for $z\in D(\delta\tau(z_n)),$ then
\begin{displaymath}
\begin{split}
\sum_{n} \widehat{\mu}_{\delta}(z_n)^p &\lesssim \sum_{n} \int_{D(\delta\tau(z_n))} \!\!\widehat{\mu}_{4\delta}(z)^p\, \frac{dA(z)}{\tau(z)^2}
\lesssim  \int_{\D} \widehat{\mu}_{4\delta}(z)^p\,d\lambda_{\tau}(z).
\end{split}
\end{displaymath}
\mbox{}
\\
$(c)\Rightarrow (a)$.  We have
\begin{displaymath}
B_{\omega}\mu(z)\leq \|K_ z\|^{-2}_{A^2_{\omega}}\sum_{n} \int_{D(\delta\tau(z_n))} \! \! |K_z(s)|^2\,\omega(s)\, d\mu(s).
\end{displaymath}
\mbox{}
\\
We use Lemma \ref{subHarmP} in order to obtain

\[
\begin{split}
\int_{D(\delta\tau(z_n))} \! \! |K_z(s)|^2\,\omega(s)\, d\mu(s)& \lesssim \int_{D(\delta\tau(z_n))} \! \!
\bigg (\frac{1}{\tau(s)^2} \int_{D(\delta \tau(s))} \!\!|K_z(\xi)|^2\,\omega(\xi)\,dA(\xi) \bigg ) d\mu(s)\\
\\
& \lesssim \bigg ( \int_{D(3\delta \tau(z_ n))} \!\!|K_z(\xi)|^2\,\omega(\xi)\,dA(\xi) \bigg ) \,\,\widehat{\mu}_{\delta}(z_ n).
\end{split}
\]
\mbox{}
\\
If $p>1$, by H\"{o}lder's inequality,

\[
\begin{split}
\Bigg (\sum_{n} \int_{D(\delta\tau(z_n))} \! \! &|K_z(s)|^{2}\,\omega(s)\, d\mu(s)\Bigg )^p\\
\\
&\lesssim \|K_ z\|^{2(p-1)}_{A^2_{\omega}} \sum_ n \bigg ( \int_{D(3\delta \tau(z_ n))} \!\!|K_z(\xi)|^2\,\omega(\xi)\,dA(\xi) \bigg ) \,\,\widehat{\mu}_{\delta}(z_ n)^p.
\end{split}
\]
\mbox{}
\\
This gives

\[
\int_{\D} B_{\omega} \mu(z)^p \,d\lambda_{\tau}(z)\lesssim \sum_ n \widehat{\mu}_{\delta}(z_ n)^p \int_{D(3\delta \tau(z_ n))} \!\!\bigg(\int_{\D} |K_{\xi}(z)|^2\,\|K_ z\|^{-2}_{A^2_{\omega}} d\lambda_{\tau}(z)\bigg)\, \omega(\xi)\,dA(\xi).
\]
\mbox{}
\\
Since $\|K_ z\|^{2}_{A^2_{\omega}}\asymp \tau(z)^{-2}\omega(z)^{-1}$, we have

\[
\int_{\D} |K_{\xi}(z)|^2\,\|K_ z\|^{-2}_{A^2_{\omega}} d\lambda_{\tau}(z)\asymp \|K_{\xi}\|^2_{A^2(\omega)}\asymp \tau(\xi)^{-2}\omega(\xi)^{-1}.
\]
\mbox{}
\\
Putting this estimate in the previous inequality, we finally get

\[
\int_{\D} B_{\omega} \mu(z)^p \,d\lambda_{\tau}(z)\lesssim \sum_ n \widehat{\mu}_{\delta}(z_ n)^p, \qquad p\ge 1.
\]
This finishes the proof.
\end{proof}
\mbox{}
\\
\begin{rk}
It should be observed that the equivalence of (b) and (c) in Proposition \ref{SchatC} continues to hold for $0<p<1$ as well as the implication (a) implies (b). In order to get equivalence with condition (a) even in the case $0<p<1$, it seems that one needs $L^p$-integral estimates for reproducing kernels, estimates that are not available nowadays.
\end{rk}
\mbox{}
\\

Next Lemma is the analogue to our setting of a well known result for standard Bergman spaces.

\begin{lemma}\label{KL-T}
Let $\omega \in \mathcal{W}$, and $T$ be a positive operator on $A^ 2_{\omega}$. Let $\widetilde{T}$ be the Berezin transform of the operator $T$ defined by
\[\widetilde{T}(z)=\langle Tk_ z,k_ z\rangle_{\omega},\quad z\in \D.\]
\begin{enumerate}
\item[(a)]  Let $0<p\le 1$. If $\widetilde{T} \in L^p(\D,d\lambda_{\tau})$ then $T$ is in $S_ p$.

\item[(b)] Let $p\ge 1$. If $T$ is in $S_ p$ then $\widetilde{T} \in L^p(\D,d\lambda_{\tau})$.
\end{enumerate}
\end{lemma}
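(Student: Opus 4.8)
The plan is to prove Lemma \ref{KL-T} by choosing a $(\delta,\tau)$-lattice $\{z_n\}$ and working with the orthonormal-type structure coming from the normalized reproducing kernels $k_{z_n}$. The two parts are dual in spirit: part (a) gives an upper bound for the Schatten norm via a decomposition of $T$ into rank-one pieces, while part (b) extracts a lower bound for $\widetilde T$ from the Schatten membership. Throughout I would use that $d\lambda_\tau$ is, up to the finite multiplicity $N$ from Lemma \ref{Lcoverings}, comparable to counting measure on the lattice: for any positive function $h$ on $\D$ one has $\sum_n h(z_n)\,\tau(z_n)^{-2}\cdot\tau(z_n)^2 \asymp$ a Riemann-type sum for $\int_\D h\,d\lambda_\tau$, and more precisely $\widehat{\mu}_\delta$-style averaging lets one pass between pointwise values at $z_n$ and $L^p(d\lambda_\tau)$ integrals (exactly as in the proof of Proposition \ref{SchatC}).

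For part (a), with $0<p\le 1$, I would exploit the quasi-norm inequality $\|S+T\|_p^p\le\|S\|_p^p+\|T\|_p^p$ recorded in the introduction. The idea is to dominate $T$ by a sum of positive rank-one operators. Since $\{D(\delta\tau(z_n))\}$ covers $\D$, one writes the Berezin transform $\widetilde T(z)=\langle Tk_z,k_z\rangle_\omega$ and uses subharmonicity-type comparability of $k_z$ to $k_{z_n}$ on $D(\delta\tau(z_n))$ (via Theorem \ref{RK-PE}(b)) to show $\langle Tk_{z_n},k_{z_n}\rangle_\omega \lesssim \widehat{(\widetilde T\,d\lambda_\tau)}$ averages, so that $\sum_n \langle Tk_{z_n},k_{z_n}\rangle_\omega^{\,p}\lesssim \int_\D \widetilde T^{\,p}\,d\lambda_\tau$. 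The key structural fact is that for a positive operator, $\langle Tk_{z_n},k_{z_n}\rangle_\omega$ is the diagonal of $T$ against the kernels, and a rank-one operator $R_n f=\langle f,k_{z_n}\rangle_\omega\, k_{z_n}$ has $\|R_n\|_p=\langle Tk_{z_n},k_{z_n}\rangle_\omega$-scale singular value; controlling $T\le \sum_n c_n R_n$ in the order sense and applying the $p$-triangle inequality yields $\|T\|_p^p\lesssim\sum_n\langle Tk_{z_n},k_{z_n}\rangle_\omega^{\,p}\lesssim\|\widetilde T\|_{L^p(d\lambda_\tau)}^p$.

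For part (b), with $p\ge 1$, I would run the reverse estimate. Here $T\in S_p$ is given and I want $\int_\D\widetilde T^{\,p}\,d\lambda_\tau<\infty$. Since $p\ge 1$, I would use convexity: by the integral form of Jensen (or simply that $x\mapsto x^p$ is convex together with the fact that $k_z$ expands in an orthonormal basis), one has $\widetilde T(z)^p=\langle Tk_z,k_z\rangle_\omega^{\,p}\le \langle T^p k_z,k_z\rangle_\omega$ when $T$ is positive, because the map $T\mapsto\langle Tk_z,k_z\rangle_\omega$ is a state and states respect operator convexity of $t\mapsto t^p$ for $p\ge 1$. Integrating against $d\lambda_\tau$ and using the reproducing-kernel normalization $\|K_z\|^2_{A^2_\omega}\asymp\tau(z)^{-2}\omega(z)^{-1}$ gives $\int_\D\langle T^p k_z,k_z\rangle_\omega\,d\lambda_\tau(z)\asymp \mathrm{tr}(T^p)=\|T\|_p^p$, the comparability being precisely the computation $\int_\D|K_\xi(z)|^2\|K_z\|_{A^2_\omega}^{-2}\,d\lambda_\tau(z)\asymp\|K_\xi\|^2_{A^2_\omega}$ already used in Proposition \ref{SchatC}. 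This reduces everything to the spectral computation of the trace of $T^p$ tested against the kernels.

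The main obstacle I anticipate is part (a): turning the pointwise/averaging control of $\widetilde T$ into a genuine \emph{operator} domination $T\lesssim\sum_n c_n R_n$ that is valid in the positive-operator order and respects the $S_p$ quasi-norm. The delicate point is that the lattice kernels $k_{z_n}$ are not orthonormal, so one cannot naively treat $\sum_n R_n$ as a diagonal operator; I would handle this by invoking the finite-overlap property (item (iv) of Lemma \ref{Lcoverings}) to bound the Gram matrix of the $k_{z_n}$ and thereby control cross terms, using the off-diagonal kernel decay from Lemma \ref{Borichevlemma} (estimate \eqref{BL2}) to show the Gram operator is bounded. The operator-convexity inequality in part (b) is standard but must be stated carefully for the unbounded-looking test vectors; since each $k_z$ is a genuine unit vector in $A^2_\omega$, this is legitimate and the trace identity closes the argument.
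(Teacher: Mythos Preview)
Your argument for part (b) is essentially the paper's argument: the operator-convex inequality $\widetilde{T}(z)^p=\langle Tk_z,k_z\rangle_\omega^{\,p}\le \langle T^p k_z,k_z\rangle_\omega$ for $p\ge 1$, together with the trace identity
\[
\mathrm{tr}(T^p)=\sum_k\|T^{p/2}e_k\|^2=\int_\D\langle T^pK_z,K_z\rangle_\omega\,\omega(z)\,dA(z)\asymp\int_\D\langle T^pk_z,k_z\rangle_\omega\,d\lambda_\tau(z),
\]
obtained from Parseval, Fubini and the norm estimate $\|K_z\|_{A^2_\omega}^2\,\omega(z)\asymp\tau(z)^{-2}$. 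That part is fine.

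The gap is in part (a). Your plan hinges on an operator-order domination $T\le\sum_n c_n R_n$ with $R_n=\langle\,\cdot\,,k_{z_n}\rangle_\omega k_{z_n}$ and $c_n$ controlled by $\widetilde{T}(z_n)$, but you give no mechanism for producing such an inequality for a \emph{general} positive operator $T$. There is no integral representation of $T$ through the kernels (as there is for $T_\mu$), and the off-diagonal kernel decay from Lemma \ref{Borichevlemma} only tells you the Gram matrix of the $k_{z_n}$ is bounded; it says nothing about how $T$ itself sits relative to the rank-one pieces $R_n$. A positive $T$ can have range far from the span of finitely many $k_{z_n}$'s, so the discretized domination you posit does not follow from the covering/finite-multiplicity properties alone. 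The step you flag as ``the main obstacle'' is in fact not just delicate; as written it is unproved.

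The point you are missing is that the very same trace computation you use for (b) also proves (a), with the \emph{reverse} Jensen-type inequality. For $0<p\le 1$ the function $t\mapsto t^p$ is operator concave, so for unit vectors one has
\[
\langle T^p k_z,k_z\rangle_\omega\le \langle Tk_z,k_z\rangle_\omega^{\,p}=\widetilde{T}(z)^p
\]
(see \cite[Proposition 1.31]{Zhu}). Plugging this into the trace identity above gives
\[
\mathrm{tr}(T^p)\asymp\int_\D\langle T^pk_z,k_z\rangle_\omega\,d\lambda_\tau(z)\le\int_\D\widetilde{T}(z)^p\,d\lambda_\tau(z),
\]
which is exactly $T\in S_p$. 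No lattice, no rank-one decomposition, no Gram-matrix control is needed; parts (a) and (b) are two halves of the same argument, distinguished only by the direction of the operator-Jensen inequality.
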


\begin{proof}
Let $p>0$. The positive operator $T$ is in $S_ p$ if and only if $T^p$ is in the trace class $S_ 1$. Fix an orthonormal basis $\{e_ k\}$ of $A^2_{\omega}$. Since $T^p$ is positive, it belongs to the trace class if and only if
$\sum _ k \langle T^p e_ k,e_ k \rangle _{\omega} <\infty.$
Let $S=\sqrt{T^p}$. Then
\[\sum _ k \langle T^p e_ k,e_ k \rangle _{\omega} =\sum_ k \|S e_ k\|_{A^2_{\omega}}^2.\]
Now, by  Fubini's theorem and Parseval's identity, we have
\begin{displaymath}
\begin{split}
\sum_ k \|S e_ k\|_{A^2_{\omega}}^2&=\sum_ k \int_{\D}|S e_ k(z) |^2\,\omega(z)\,dA(z)
=\sum_ k \int_{\D}|\langle S e_ k, K_ z \rangle_{\omega} |^2\,\omega(z)\,dA(z)\\
\\
&=\int_{\D}\left (\sum_k |\langle  e_ k, SK_ z \rangle_{\omega} |^2\right )\,\omega(z)\,dA(z)
=\int_{\D}\|SK_ z\|_{A^2_{\omega}}^2\,\omega(z)\,dA(z)\\
\\
&=\int_{\D}\langle T^p K_ z,K_ z\rangle_{\omega}\,\omega(z)\,dA(z)
=\int_{\D}\langle T^p k_ z,k_ z\rangle_{\omega}\|K_ z\|_{A^2_{\omega}}^{2}\,\omega(z)\,dA(z)\\
\\
&\asymp \int_{\D}\langle T^p k_ z,k_ z\rangle_{\omega}\,d\lambda_{\tau}(z).
\end{split}
\end{displaymath}
\mbox{}
\\
Hence, both (a) and (b) are consequences of the inequalities (see \cite[Proposition 1.31]{Zhu})

\[ \langle T^p k_ z,k_ z\rangle_{\omega} \le  \left [\langle T k_ z,k_ z\rangle_{\omega} \right ]^p=[\widetilde{T}(z)]^p, \qquad 0<p\le 1\]
and

\[[\widetilde{T}(z)]^p=\left [\langle T k_ z,k_ z\rangle_{\omega} \right ]^p \le \langle T^p k_ z,k_ z\rangle_{\omega},\qquad p\ge 1.\]
\mbox{}
\\
This finishes the proof of the lemma.
\end{proof}
\mbox{}
\\

\begin{proposition}\label{TopMu}
Let $\omega \in \mathcal{W}$. If $0 < p \leq 1$ and $B_{\omega}\mu$ is in $L^p(\D,d\lambda_{\tau}),$ then  $T_\mu$ belongs to $S_p(A^2_{\omega}).$ Conversely, if $p\ge 1$ and $T_\mu$ is in $S_p(A^2_{\omega})$, then $B_{\omega}\mu \in L^p(\D,d\lambda_{\tau}).$
\end{proposition}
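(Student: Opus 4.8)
The plan is to recognize that $T_\mu$ is a positive operator on $A^2_\omega$ (which follows from Lemma \ref{LF1}, since $\langle T_\mu f, f\rangle_\omega = \int_\D |f|^2\,\omega\,d\mu \ge 0$), and then to identify its Berezin transform $\widetilde{T_\mu}$ with the $\omega$-Berezin transform $B_\omega\mu$ so that Lemma \ref{KL-T} applies directly. The first step is therefore to compute
\[
\widetilde{T_\mu}(z) = \langle T_\mu k_z, k_z\rangle_\omega = \int_\D |k_z(\xi)|^2\,\omega(\xi)\,d\mu(\xi) = B_\omega\mu(z),
\]
where the middle equality is exactly Lemma \ref{LF1} applied with $f=g=k_z$. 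To invoke Lemma \ref{LF1} one needs $\widehat{\mu}_\delta \in L^\infty(\D)$; I would note that the hypothesis $B_\omega\mu\in L^p(\D,d\lambda_\tau)$ (in the case $0<p\le 1$) forces this via the equivalence $B_\omega\mu \asymp \widehat{\mu}_\delta$ pointwise established in the proof of Proposition \ref{SchatC}, combined with the standard fact that $L^p(\lambda_\tau)$-membership of the averaging function gives boundedness of $T_\mu$ (Theorem \ref{TOPQ}), hence $\widehat{\mu}_\delta\in L^\infty$.

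With the identification $\widetilde{T_\mu} = B_\omega\mu$ in hand, both assertions are immediate from Lemma \ref{KL-T}. For the forward direction ($0<p\le 1$), the hypothesis $B_\omega\mu = \widetilde{T_\mu}\in L^p(\D,d\lambda_\tau)$ is precisely the hypothesis of part (a) of Lemma \ref{KL-T}, so $T_\mu\in S_p$. For the converse ($p\ge 1$), if $T_\mu\in S_p$ then part (b) of Lemma \ref{KL-T} gives $\widetilde{T_\mu}\in L^p(\D,d\lambda_\tau)$, which is exactly $B_\omega\mu\in L^p(\D,d\lambda_\tau)$. In this direction one still needs $T_\mu$ to be positive and the Berezin transform to be well-defined, but membership in $S_p\subset S_\infty$ already guarantees $T_\mu$ is bounded (indeed compact), so $\widehat{\mu}_\delta\in L^\infty$ by Theorem \ref{TOPQ} and Lemma \ref{LF1} again applies.

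The only genuine subtlety, and the step I would be most careful about, is verifying the applicability of Lemma \ref{LF1} (i.e.\ that $\widehat{\mu}_\delta\in L^\infty$) in each direction before writing $\langle T_\mu k_z,k_z\rangle_\omega = B_\omega\mu(z)$; everything else is a formal matching of hypotheses to the already-proved Lemma \ref{KL-T}. I would present it as: first establish positivity of $T_\mu$ and the integral formula for the diagonal Berezin transform, then simply cite Lemma \ref{KL-T}(a) and (b) for the two halves. The asymmetry between $0<p\le 1$ and $p\ge 1$ is inherited verbatim from Lemma \ref{KL-T}, so no extra work beyond the identification $\widetilde{T_\mu}=B_\omega\mu$ is required.
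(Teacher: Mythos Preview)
Your proposal is correct and follows essentially the same route as the paper: check that the hypothesis forces $T_\mu$ to be bounded (so Lemma \ref{LF1} applies and $T_\mu$ is positive), identify $\widetilde{T_\mu}=B_\omega\mu$, and then invoke parts (a) and (b) of Lemma \ref{KL-T}. One small wording point: the proof of Proposition \ref{SchatC} only establishes the pointwise inequality $\widehat{\mu}_\delta \lesssim B_\omega\mu$, not a two-sided pointwise equivalence, but that single inequality is all your argument actually uses.
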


\begin{proof}
If $B_{\omega}\mu$ is in $L^p(\D,d\lambda_{\tau}),$ then it is easy to see that $T_{\mu}$ is bounded on $A^2_{\omega}$ (just use the discrete version in Proposition \ref{SchatC} to see that the condition in Theorem \ref{TOPQ} holds). Therefore,  the result is a consequence of Lemma \ref{KL-T} since  $\widetilde{T_\mu}(z)= B_{\omega}\mu(z)$.
\end{proof}

\mbox{}
\\
Now we are almost ready for the characterization of Schatten class Toeplitz operators, but we need first some technical lemmas on properties of lattices. We use the notation
\[
d_{\tau}(z,\zeta)=\frac{|z-\zeta|}{\min(\tau(z),\tau(\zeta))},\qquad z,\zeta \in \D.
\]

\begin{lemma}\label{L-M}
Let $\tau \in \mathcal{L}$, and $\{z_ j\}$ be a $(\delta,\tau)$-lattice on $\D$. For each $\zeta \in \D$, the set
\[ D_ m(\zeta)=\big \{z\in \D: d_{\tau}(z,\zeta)<2^m \delta \big \}
\]
contains at most $K$ points of the lattice, where $K$ depends on the positive integer $m$ but not on the point $\zeta$.
\end{lemma}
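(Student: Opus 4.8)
The plan is to count lattice points in the set $D_m(\zeta)$ by a volume (area) argument, exploiting the fact that the separation condition (i) in the Oleinik lemma forces the discs $D(\delta\tau(z_j))$ associated to distinct lattice points to be essentially disjoint, while the condition $d_\tau(z,\zeta)<2^m\delta$ confines all such points to a Euclidean disc of controlled radius around $\zeta$.

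First I would pin down the geometry. Suppose $z_j,z_k$ are two distinct lattice points lying in $D_m(\zeta)$. From the defining inequality $d_\tau(z_j,\zeta)<2^m\delta$ we get $|z_j-\zeta|<2^m\delta\min(\tau(z_j),\tau(\zeta))\le 2^m\delta\,\tau(\zeta)$, and similarly for $z_k$; hence every lattice point in $D_m(\zeta)$ lies in the Euclidean disc $D\big(\zeta,\,2^m\delta\,\tau(\zeta)\big)$. Moreover, since the $z_j$ are close to $\zeta$ in the $d_\tau$ metric, I would use the Lipschitz property (B) of $\tau$ together with \eqref{asymptau} to show that $\tau(z_j)\asymp\tau(\zeta)$ uniformly for all such points, with comparability constants depending only on $m$ and $\delta$ (for instance, $|z_j-\zeta|\le 2^m\delta\,\tau(\zeta)$ and $\delta<m_\tau/4$ keep $z_j$ in a fixed dilate of a disc on which $\tau$ is comparable to $\tau(\zeta)$). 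This is the step where the hypothesis $\delta\in(0,m_\tau/4)$ and the constant $m_\tau$ enter.

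Next I would convert separation into disjointness of area. By the non-overlapping condition (i) of Lemma \ref{Lcoverings}, the points satisfy $z_j\notin D(\delta\tau(z_k))$ for $j\ne k$, which gives a lower bound $|z_j-z_k|\ge\delta\,\tau(z_k)\gtrsim \delta\,\tau(\zeta)$ on the pairwise Euclidean distances. Consequently the discs $D\big(z_j,\,c\,\delta\,\tau(\zeta)\big)$, for a small fixed $c$, are pairwise disjoint (this is the standard consequence of a packing/separation bound). Each such disc has area comparable to $\big(\delta\,\tau(\zeta)\big)^2$, and every one of them is contained in a slightly enlarged Euclidean disc $D\big(\zeta,\,(2^m\delta+c\delta)\tau(\zeta)\big)$ whose area is comparable to $\big(2^m\delta\,\tau(\zeta)\big)^2$. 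Comparing the total area of the disjoint small discs with the area of the big disc yields
\[
K\cdot\big(c\,\delta\,\tau(\zeta)\big)^2 \;\lesssim\; \big(2^{m}\delta\,\tau(\zeta)\big)^2,
\]
so $K\lesssim 4^m/c^2$, a bound depending only on $m$ (and the fixed structural constants $c_1,c_2,\delta$) but independent of $\zeta$, which is exactly the claim.

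The main obstacle, and the point requiring the most care, is establishing the uniform comparability $\tau(z_j)\asymp\tau(\zeta)$ for all lattice points inside $D_m(\zeta)$ with constants that do not degrade as $\zeta\to\partial\D$. The subtlety is that $d_\tau$ uses $\min(\tau(z),\tau(\zeta))$ in the denominator, so a priori a point $z_j$ with $d_\tau(z_j,\zeta)$ small could have $\tau(z_j)$ either much larger or controlled; I would resolve this by invoking property (B) to write $|\tau(z_j)-\tau(\zeta)|\le c_2|z_j-\zeta|\le c_2\,2^m\delta\,\min(\tau(z_j),\tau(\zeta))$ and then, for $\delta$ small enough relative to $1/(c_2 2^m)$, solving this inequality to force $\tau(z_j)\asymp\tau(\zeta)$. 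Once that comparability is secured, all the Euclidean radii above can be expressed in terms of $\tau(\zeta)$ alone and the area-counting argument goes through cleanly.
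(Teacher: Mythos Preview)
Your proposal is correct and follows essentially the same area/packing argument as the paper: compare $\tau(z_j)$ with $\tau(\zeta)$ via the Lipschitz condition (B), pack disjoint small discs around the lattice points into one big disc around $\zeta$, and divide areas. One small correction: your final worry about needing $\delta$ small relative to $1/(c_2 2^m)$ is unnecessary, since from $|\tau(z_j)-\tau(\zeta)|\le c_2\,2^m\delta\,\min(\tau(z_j),\tau(\zeta))$ you get immediately $\max(\tau(z_j),\tau(\zeta))\le (1+c_2\,2^m\delta)\min(\tau(z_j),\tau(\zeta))$, which is the desired comparability with an $m$-dependent constant---exactly what the paper uses---and no smallness of $\delta$ relative to $m$ is required.
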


\begin{proof}
Let $K$ be the number of points of the lattice contained in $D_ m(\zeta)$. Due to the Lipschitz condition (B), we have

\[
\tau(\zeta) \le \tau(z_ j)+c_ 2 |\zeta-z_ j| \le (1+c_ 2 2^m \delta)\, \tau(z_ j)=C_ m \,\tau(z_ j).
\]
Then

\[
 K\cdot \tau(\zeta)^2 \le C_ m^2 \sum_{z_ j\in D_ m(\zeta)} \!\!\! \tau(z_ j)^2 \lesssim C_ m^2 \cdot Area \left (\bigcup_{z_ j\in D_ m(\zeta)} \!\! D\Big (\frac{\delta}{4} \tau(z_ j)\Big )  \right ).
\]
\mbox{}
\\
As done before, we also have $\tau(z_ j)\le C_ m \tau(\zeta)$, if $z_ j\in D_ m(\zeta)$. From this we easily see that
\[
D\Big (\frac{\delta}{4}\, \tau(z_ j)\Big )\subset D \Big ( c 2^m\delta  \tau(\zeta)\Big )
\]
for some constant $c$. Since the sets $\{D(\frac{\delta}{4}\,\tau(z_ j))\}$ are pairwise disjoints, we have

\[
\bigcup_{z_ j\in D_ m(\zeta)} \!\! D\Big (\frac{\delta}{4} \tau(z_ j)\Big ) \subset D \Big ( c 2^m\delta  \tau(\zeta)\Big ).
\]
Therefore, we get

\[K \cdot \tau(\zeta)^2 \le C_ m^2 \cdot Area \left (D \Big ( c 2^m\delta  \tau(\zeta)\Big ) \right )\lesssim C_ m^2 2^{2m} \tau(\zeta)^2,
\]
\mbox{}
\\
that implies $K\le C 2^{4m}$.
\end{proof}
\mbox{}
\\

Next, we use the result just proved to decompose any $(\delta,\tau)$-lattice into a finite number of ``big" separated subsequences.

\begin{lemma}\label{L-part}
Let $\tau\in \mathcal{L}$ and $\delta\in (0,m_{\tau})$. Let $m$ be a positive integer. Any $(\delta,\tau)$-lattice $\{z_ j\}$ on $\D$ can be partitioned into $M$ subsequences such that, if $a_ j$ and $a_ k$ are different points in the same subsequence, then $d_{\tau}(a_ j,a_ k)\ge 2^m \delta$.
\end{lemma}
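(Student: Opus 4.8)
The plan is to reformulate the statement as a graph-coloring problem and apply a greedy coloring argument, using Lemma~\ref{L-M} to bound the relevant vertex degrees. First I would form the graph $G$ whose vertices are the points $\{z_j\}$ of the lattice, declaring two distinct points $z_i$ and $z_j$ to be adjacent precisely when $d_{\tau}(z_i,z_j)<2^m\delta$. Partitioning the lattice into $M$ subsequences so that any two distinct points in the same subsequence have $d_{\tau}$-distance at least $2^m\delta$ is exactly a proper vertex coloring of $G$ with $M$ colors: points receiving the same color are pairwise non-adjacent, which is the desired separation.

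The key step is then to control the maximum degree of $G$. For a fixed vertex $z_k$, its neighbors are precisely the lattice points lying in the set
\[
D_m(z_k)=\big\{z\in\D: d_{\tau}(z,z_k)<2^m\delta\big\}
\]
(other than $z_k$ itself). By Lemma~\ref{L-M}, applied with $\zeta=z_k$, this set contains at most $K$ points of the lattice, where $K$ depends only on $m$ (indeed $K\le C\,2^{4m}$) and not on the particular point. Hence every vertex of $G$ has degree at most $K-1$, so the maximum degree $\Delta(G)$ is bounded by a constant depending only on $m$.

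To finish, I would invoke the standard fact that any graph of maximum degree $\Delta$ admits a proper coloring with at most $\Delta+1$ colors, realized by the greedy algorithm: order the vertices arbitrarily and assign to each one the smallest color not already used among its previously-colored neighbors; since each vertex has at most $\Delta$ neighbors, a color in $\{1,\dots,\Delta+1\}$ is always available. Setting $M=K$ (so that $M\le C\,2^{4m}+1$ depends only on $m$ and $\delta$), this produces the required partition into $M$ subsequences, each of which is $2^m\delta$-separated in the $d_{\tau}$ metric.

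The main obstacle is conceptual rather than computational: one must recognize that the symmetry of the relation $d_{\tau}(z_i,z_j)<2^m\delta$ makes this a genuine (undirected) graph-coloring problem, and that Lemma~\ref{L-M} supplies exactly the uniform degree bound needed for the greedy argument. A minor technical point to verify is that the neighborhood $D_m(z_k)$ used to bound the degree is the same set to which Lemma~\ref{L-M} applies, which is immediate since $d_{\tau}$ is symmetric; once that is noted, the remaining details are routine and the value of $M$ depends only on $m$ as claimed.
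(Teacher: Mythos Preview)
Your argument is correct and rests on the same key ingredient as the paper's proof, namely the uniform bound from Lemma~\ref{L-M} on the number of lattice points within $d_{\tau}$-distance $2^m\delta$ of a given point. The only difference is in how the partition is constructed: you phrase it as a greedy vertex coloring of the graph with edges $\{d_{\tau}(z_i,z_j)<2^m\delta\}$, obtaining $M=K$ color classes from the degree bound $\Delta(G)\le K-1$, whereas the paper iteratively extracts maximal $2^m\delta$-separated subsequences and argues by contradiction that $M=K+1$ rounds must exhaust the lattice. These are two standard packagings of the same greedy idea, and both yield a bound $M$ depending only on $m$.
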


\begin{proof}
Let $K$ be the number given by Lemma \ref{L-M}. From the lattice $\{z_ j\}$ extract a maximal $(2^m \delta)$-subsequence, that is, we select one point $\xi_ 1$ in our lattice, and then we continue selecting points $\xi_ n$ of the lattice so that $d_{\tau}(\xi_ n, \xi)\ge 2^m \delta$ for all previous selected point $\xi$. We stop once the subsequence is maximal, that is, when all the remaining points $x$ of the lattice satisfy $d_{\tau}(x,\xi_ x)<2^m \delta$ for some $\xi_ x$ in the subsequence. With the remaining points of the lattice we extract another maximal $(2^m \delta)$-subsequence, and we repeat the process until we get $M=K+1$ maximal $(2^m \delta)$-subsequences. If no point of the lattice is left, we are done. On the other hand, if a point $\zeta$ in the lattice is left, this means that there are $M=K+1$ distinct points $x_{\zeta}$ (at least one for each subsequence) in the lattice with $ d_{\tau}(\zeta,x_{\zeta})<2^m \delta$, in contradiction with the choice of $K$ from Lemma \ref{L-M}. The proof is complete.
\end{proof}
\mbox{}
\\

Now we are ready for the main result of this Section, that characterizes the membership in the Schatten ideals of the Toeplitz operator acting on $A^2_{\omega}$.

\begin{theorem}\label{schtCla}
Let  $\omega \in \mathcal{W}$ and $ 0 <p<\infty.$  The following conditions are equivalent:
\begin{enumerate}
\item[$(a)$] The Toeplitz operator $T_\mu$ is in  $S_p(A^2_{\omega}).$
\item[$(b)$] The function $\widehat{\mu}_\delta$ is in $L^p(\D,d\lambda_{\tau})$ for $\delta \in (0, m_\tau) $ sufficiently small.
\item[$(c)$] The sequence $\lbrace\widehat{\mu}_{\delta}(z_n)\rbrace$ is in $\ell^p$ for any $\delta>0$ small enough.
\end{enumerate}
Moreover, when $p\ge 1$, the previous conditions are also equivalent to:
\begin{enumerate}
\item[$(d)$] The function $B_{\omega}\mu$ is in $L^p(\D,d\lambda_{\tau}).$
\end{enumerate}
\end{theorem}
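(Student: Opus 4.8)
The plan is to establish only the two genuinely new implications, namely the sufficiency $(c)\Rightarrow(a)$ for every $0<p<\infty$ and the necessity $(a)\Rightarrow(c)$ in the range $0<p<1$, and to assemble everything else from the results already proved. Indeed, $(b)\Leftrightarrow(c)$ holds for all $p$ by Proposition \ref{SchatC} together with the Remark following it, since the equivalence of its conditions $(b)$ and $(c)$ persists for $0<p<1$. When $p\ge 1$, Proposition \ref{SchatC} gives in addition $(d)\Leftrightarrow(b)\Leftrightarrow(c)$, while Proposition \ref{TopMu} supplies $(a)\Rightarrow(d)$; thus for $p\ge 1$ the only arrow missing from the whole cycle is $(c)\Rightarrow(a)$. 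For $0<p<1$ one has $(d)\Rightarrow(a)$ again from Proposition \ref{TopMu}, but condition $(d)$ is not at our disposal in that range, so both $(c)\Rightarrow(a)$ and $(a)\Rightarrow(c)$ must be argued directly. In all cases I would reduce matters to the averages over a $(\delta,\tau)$-lattice $\{z_n\}$ supplied by Lemma \ref{Lcoverings}.

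For the sufficiency $(c)\Rightarrow(a)$ I would disjointify the covering into Borel sets $E_n\subset D(\delta\tau(z_n))$ with $\bigcup_n E_n=\D$, set $\mu_n=\mu|_{E_n}$, and write $T_\mu=\sum_n T_{\mu_n}$ with each $T_{\mu_n}$ positive. Using Lemma \ref{LF1} and Theorem \ref{RK-PE}(a) one computes $\mathrm{tr}(T_{\mu_n})=\int\|K_\xi\|_{A^2_\omega}^2\,\omega\,d\mu_n\asymp\tau(z_n)^{-2}\mu(E_n)\le\widehat\mu_\delta(z_n)$. The two regimes then present complementary difficulties. For $p\ge 1$ the per-piece bound is immediate, $\|T_{\mu_n}\|_{S_p}\le\mathrm{tr}(T_{\mu_n})\lesssim\widehat\mu_\delta(z_n)$, but one cannot simply sum; here I would use Lemma \ref{L-part} to split $\{z_n\}$ into $M$ subsequences that are $(2^m\delta)$-separated, observe that within one subsequence the ranges of the $T_{\mu_n}$ are nearly orthogonal (so that the $S_p$-norm of their sum is $\lesssim(\sum_n\|T_{\mu_n}\|_{S_p}^p)^{1/p}$), and add the $M$ contributions. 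For $0<p\le 1$ the combination is trivial via $\|\sum_n T_{\mu_n}\|_{S_p}^p\le\sum_n\|T_{\mu_n}\|_{S_p}^p$, but now the per-piece bound $\|T_{\mu_n}\|_{S_p}\lesssim\widehat\mu_\delta(z_n)$ is the delicate point: by the subharmonicity estimate of Lemma \ref{subHarmP} one dominates $T_{\mu_n}$ by $C\,\widehat\mu_\delta(z_n)$ times a fixed local Toeplitz operator whose symbol lives on a single Bergman disk, and the embedding of $A^2_\omega$ into $L^2$ of such a disk has geometrically decaying singular values (a consequence of the kernel bounds of Theorem \ref{RK-PE} and the test functions of Lemma \ref{Borichevlemma}), so all its Schatten norms are bounded uniformly.

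The main obstacle is the necessity $(a)\Rightarrow(c)$ for $0<p<1$, where the naive bound ``diagonal in $\ell^p$'' is simply false. Fixing one of the $(2^m\delta)$-separated subsequences from Lemma \ref{L-part}, the kernels $\{k_{z_j}\}$ form a Bessel sequence, so $Ac=\sum_j c_j k_{z_j}$ defines a bounded $A\colon\ell^2\to A^2_\omega$ and $B:=A^*T_\mu A$ is a positive matrix with $\|B\|_{S_p}\le\|A\|^2\|T_\mu\|_{S_p}$ and diagonal $B_{jj}=\langle T_\mu k_{z_j},k_{z_j}\rangle=B_\omega\mu(z_j)\gtrsim\widehat\mu_\delta(z_j)$. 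Splitting $B=D+E$ with $D$ its diagonal part, the quasi-norm inequality gives $\sum_j\widehat\mu_\delta(z_j)^p\lesssim\|D\|_{S_p}^p\le\|B\|_{S_p}^p+\|E\|_{S_p}^p$, so everything hinges on absorbing $E$, that is, on proving $\|E\|_{S_p}^p\le\tfrac12\|D\|_{S_p}^p$. This is where the quantitative separation is indispensable: estimating each off-diagonal entry $B_{jk}=\int k_{z_j}\overline{k_{z_k}}\,\omega\,d\mu$ by splitting the integral into the regions near $z_j$, near $z_k$, and away from both, and invoking the pointwise kernel decay from Theorem \ref{RK-PE} and Lemma \ref{Borichevlemma}, one obtains $|B_{jk}|\le\theta_{jk}\sqrt{B_{jj}B_{kk}}$ with $\theta_{jk}$ decaying in $d_\tau(z_j,z_k)$; then $\|E\|_{S_p}^p\le\sum_{j\ne k}|B_{jk}|^p$ together with $(B_{jj}B_{kk})^{p/2}\le\tfrac12(B_{jj}^p+B_{kk}^p)$ reduces the claim to $\sup_j\sum_{k\ne j}\theta_{jk}^p\le\tfrac12$, which is forced by taking the separation parameter $m$ large, Lemma \ref{L-M} controlling the number of lattice points in each distance shell. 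Summing over the $M$ subsequences yields $\{\widehat\mu_\delta(z_j)\}\in\ell^p$. I expect this off-diagonal absorption — driving the kernel-decay row sums below $\tfrac12$ uniformly while keeping the diagonal comparable to $\widehat\mu_\delta$ — to be the crux, precisely because only the strong separation compensates for the failure of the contractivity of the diagonal projection on $S_p$ when $p<1$.
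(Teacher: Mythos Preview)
Your reductions to the two outstanding implications are correct and match the paper's own logical structure. However, the heart of your argument rests on an unavailable ingredient: pointwise off-diagonal decay of the reproducing kernel $K_z$. Theorem \ref{RK-PE} furnishes only the near-diagonal estimate $|K_z(\zeta)|\asymp\|K_z\|\,\|K_\zeta\|$ for $\zeta\in D(\delta\tau(z))$ and the norm estimate \eqref{Eq-NE}; it gives no bound whatsoever on $|K_z(\zeta)|$ when $d_\tau(z,\zeta)$ is large, and the Remark after Proposition \ref{SchatC} explicitly notes that the relevant $L^p$ kernel estimates are ``not available nowadays''. This undermines three places in your sketch: the claim that the local embeddings $A^2_\omega\hookrightarrow L^2(D(\delta\tau(z_n)))$ have uniformly geometrically decaying singular values (needed for your $(c)\Rightarrow(a)$ when $p<1$); the claim that the ranges of the $T_{\mu_n}$ over a separated subsequence are ``nearly orthogonal'' (your $(c)\Rightarrow(a)$ for $p\ge 1$); and, most seriously, the off-diagonal entry bound $|B_{jk}|\le\theta_{jk}\sqrt{B_{jj}B_{kk}}$ in your $(a)\Rightarrow(c)$ argument, since $B_{jk}=\int k_{z_j}\overline{k_{z_k}}\,\omega\,d\mu$ involves products of normalized kernels whose decay away from the diagonal you cannot control.

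The paper's remedy for $(a)\Rightarrow(c)$, $0<p<1$, is precisely to replace the kernels by the test functions $f_{a_n}=F_{a_n,N}/\tau(a_n)$ of Lemma \ref{Borichevlemma}, which \emph{do} satisfy the quantitative decay \eqref{BL2}. One defines $B$ on $A^2_\omega$ by $Be_n=f_{a_n}$ (bounded by the atomic decomposition in \cite{PP1}), restricts $\mu$ to $\nu=\sum_n\mu\chi_{D(\delta\tau(a_n))}$ over a $(2^m\delta)$-separated subsequence from Lemma \ref{L-part}, sets $T=B^*T_\nu B$, and then carries out the $D+E$ splitting essentially as you describe; the off-diagonal absorption works because $|f_{a_n}(\xi)|\,\omega(\xi)^{1/2}\lesssim d_\tau(\xi,a_n)^{-3N}$ with $N$ at one's disposal. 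For $(b)\Rightarrow(a)$ with $p>1$ the paper bypasses any decomposition entirely and bounds $\sum_n\langle T_\mu e_n,e_n\rangle_\omega^p$ directly for an arbitrary orthonormal set via Lemma \ref{subHarmP}, H\"older, and $\sum_n|e_n(\zeta)|^2\le\|K_\zeta\|^2$, invoking \cite[Theorem 1.27]{Zhu}; this is far shorter than your near-orthogonality route and does not need any kernel decay. For $(c)\Rightarrow(a)$ with $0<p<1$ the paper simply cites \cite{LR2}.
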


\begin{proof}
By Proposition \ref{SchatC} and the remark following it, the statements $(b)$ and $(c)$ are equivalent, and when $p\ge 1$, they are also equivalent with condition $(d)$. Hence, according to Proposition \ref{TopMu}, the result is proved for $p=1$ and we have the implication $(a)$ implies $(b)$ for $p>1$. Moreover, the implication $(c)$ implies $(a)$ for $0<p<1$ is proved in \cite{LR2} (the conditions on the weights are slightly different, but the same proof works for our class). Thus, it remains to prove that $(b)$ implies $(a)$ for $p> 1$, and that $(a)$ implies $(c)$ when $0<p<1$. \\

Let $1<p<\infty$,  and assume that $\widehat{\mu}_{\delta} \in L^p(\D,d\lambda_{\tau})$ with $\delta\in (0,m_{\tau})$ small enough. It is not difficult to see, using the equivalent discrete condition in $(c)$ together with Theorem \ref{TOPQ}, that $T_{\mu}$ must be compact. For any orthonormal set $\{e_ n\}$ of $A^2_{\omega}$, we have
\begin{equation}\label{Sp1}
\sum_ n \langle T_{\mu} e_ n,e_ n \rangle _{\omega}^p =\sum_ n \left ( \int_{\D} |e_ n(z)|^2 \,\omega(z)\,d\mu(z) \right )^p.
\end{equation}
By Lemma \ref{subHarmP} and Fubini's theorem,
\[
\begin{split}
\int_{\D} |e_ n(z)|^2 \,\omega(z)\,d\mu(z) &\lesssim \int_{\D} \bigg(\frac{1}{\tau(z)^2}\int_{D(\delta \tau(z))} |e_ n(\zeta)|^2 \,\omega(\zeta)\,dA(\zeta)\bigg )\,d\mu(z)\\
\\
&\lesssim \int_{\D} |e_ n(\zeta)|^2 \,\omega(\zeta)\,\widehat{\mu}_{\delta}(\zeta)\,dA(\zeta).
\end{split}
\]
Since $p>1$ and $\|e_ n\|_{A^2_{\omega}}=1$, we can apply H\"{o}lder's inequality to get

\[
\left ( \int_{\D} |e_ n(z)|^2 \,\omega(z)\,d\mu(z) \right )^p \lesssim \int_{\D} |e_ n(\zeta)|^2 \,\omega(\zeta)\,\widehat{\mu}_{\delta}(\zeta)^p\,dA(\zeta).
\]
\mbox{}
\\
Putting this into \eqref{Sp1} and taking into account that $\|K_{\zeta}\|^2_{A^2_{\omega}}\,\omega(\zeta)\asymp \tau(\zeta)^{-2}$, we see that

\[
\begin{split}
\sum_ n \langle T_{\mu} e_ n,e_ n \rangle _{\omega}^p & \lesssim \int_{\D} \bigg (\sum_ n |e_ n(\zeta)|^2 \bigg ) \,\omega(\zeta)\,\widehat{\mu}_{\delta}(\zeta)^p\,dA(\zeta)\\
\\
& \le \int_{\D} \|K_{\zeta}\|^2_{A^2_{\omega}} \,\omega(\zeta)\,\widehat{\mu}_{\delta}(\zeta)^p\,dA(\zeta)\\
\\
& \asymp \int_{\D} \widehat{\mu}_{\delta}(\zeta)^p\,d\lambda_{\tau} (\zeta).
\end{split}
\]
By \cite[Theorem 1.27]{Zhu} this proves that $T_{\mu}$ is in $S_ p$ with
$\|T_{\mu}\|_{S_ p} \lesssim \|\widehat{\mu}_{\delta}\|_{L^p(\D,d\lambda_{\tau})}.$
\mbox{}
\\

Next, let $0<p<1$, and suppose that $T_\mu \in S_p(A^2_{\omega}).$ We will prove that $(c)$ holds. The method for this proof has his roots in previous work of S. Semmes \cite{Sem} and D. Luecking \cite{Lue}. Let $\lbrace z_n\rbrace$ be a $(\delta,\tau)$-lattice on $\D$. We want to show that $\{\widehat{\mu}_{\delta}(z_ n)\}$ is in $\ell^p$. To this end, we fix a large positive integer $m\ge 2$ and apply Lemma \ref{L-part} to partition the lattice $\{z_ n\}$ into $M$ subsequences such that any two distinct points $a_ j$ and $a_ k$ in the same subsequence satisfy $d_{\tau}(a_ j,a_ k)\ge 2^m \delta$. Let $\{a_ n \}$ be such a subsequence and consider the measure

\[
\nu=\sum_ n \mu \chi_ n,
\]
\mbox{}
\\
where $\chi_ n$ denotes the characteristic function of $D(\delta \tau(a_ n))$. Since $m\ge 2$, the disks $D(\delta \tau(a_ n))$ are pairwise disjoints.
Since $T_{\mu}$ is in $S_ p$ and $0\le \nu \le \mu$, then $0\le T_{\nu}\le T_{\mu}$ which implies that $T_{\nu}$ is also in  $S_ p$. Moreover, $\|T_{\nu}\|_{S_ p}\le \|T_{\mu}\|_{S_ p}$.
Fix an orthonormal basis $\lbrace e_n\rbrace$ for $A^2_{\omega} $ and define an operator $B$ on  $A^2_{\omega} $  by
\[
B \left( \sum_ n \lambda_ n\,  e_n \right ) = \sum_ n \lambda_ n \,f_{a_n},
\]
where $f_{a_n}=F_{a_ n,N}/\tau(a_ n)$ and $F_{a_ n,N}$ are the functions appearing in Lemma \ref{Borichevlemma} with $N$ taken big enough so that $3Np-4>2p$. By  \cite[Proposition 2]{PP1}, the operator $B$ is bounded. Since $T_{\nu}\in S_ p$, the operator $T=A^*T_{\nu} A$ is also in $S_ p$, with
\[
\|T\|_{S_ p}\le \|B\|^2 \cdot \|T_{\nu}\|_{S_ p}.
\]
We split the operator $T$ as $T = D + E,$ where $D$ is the diagonal operator on  $A^2_{\omega} $ defined by
\[
 Df = \sum_{n=1}^{\infty} \langle Te_n,e_n\rangle_{\omega}\, \langle f,e_n\rangle _{\omega}\, e_n,   \qquad f\in A^2_{\omega},
\]
and $E = T - D.$ By the triangle inequality,
\begin{equation}\label{DTDE}
\|T\|^p_{S_p} \geq \|D\|^p_{S_p} - \|E\|^p_{S_p}.
\end{equation}
\mbox{}
\\
Since $D $ is positive diagonal operator,   we have
 \begin{equation*}
\begin{split}
 \|D\|^p_{S_p}& = \sum_{n} \langle Te_n, e_n\rangle _{\omega} ^p=  \sum_{n} \langle T_\nu f_{a_n}, f_{a_n}\rangle _{\omega} ^p\\
\\
&=  \sum_{n} \left(\int_{\D} |f_{a_ n}(z)|^2\,\omega(z)\,d\nu(z) \right )^p \\
\\
&\ge \sum_{n} \left(\int_{D(\delta\tau(a_ n))} \frac{|F_{a_ n,N}(z)|^2}{\tau(a_ n)^2}\,\omega(z)\,d\mu(z) \right )^p.
\end{split}
\end{equation*}
Hence, by Lemma \ref{Borichevlemma}, there is a positive constant $C_ 1$ such that
\begin{equation}\label{NDD}
\|D\|^p_{S_p} \ge C_ 1 \sum_ n \widehat{\mu}_{\delta}(a_ n)^p.
\end{equation}
On the other hand, since $0<p<1$, by \cite[Proposition 1.29]{Zhu} and Lemma \ref{LF1}  we have
\begin{equation}\label{DiagDec}
\begin{split}
 \|E\|^p_{S_p}& \leq  \sum_{n} \sum_{k}  \langle Ee_n, e_k\rangle _{\omega} ^p = \sum_{n,k:k\neq n}  \langle T_\nu f_{a_n}, f_{a_k}\rangle_{\omega}^p\\
 \\
&\leq \sum_{n,k:k\neq n}  \bigg(\int_{\D} | f_{a_n}(\xi)|\,| f_{a_k}(\xi)|\,\omega(\xi)\, d\nu(\xi)\bigg)^p\\
\\
&=  \sum_{n.k: k\neq n}  \bigg(\sum_{j} \int_{D(\delta\tau(a_j))} | f_{a_n}(\xi)|\,| f_{a_k}(\xi)| \,\omega(\xi)\,d\mu(\xi)\bigg)^p.
\end{split}
\end{equation}

If $n\neq k,$ then $d_{\tau}(a_ n,a_ k) \geq 2^m\delta.$ Thus,  for $\xi\in D(\delta \tau(a_ j))$, is not difficult to see that either
\[
d_{\tau}(\xi,a_ n)\ge 2^{m-2}\delta \quad \textrm{or}\quad  d_{\tau}(\xi,a_ k)\ge 2^{m-2}\delta.
 \]
 Indeed, since $n\ne k$, then either $d_{\tau}(a_ n,a_ j) \geq 2^m\delta$ or $d_{\tau}(a_ j,a_ k) \geq 2^m\delta.$ Suppose that $d_{\tau}(a_ n,a_ j) \geq 2^m\delta$. If $d_{\tau}(\xi,a_ n)< 2^{m-2}\delta$, then
\[
\begin{split}
|a_ n-a_ j| &  \le |a_ n-\xi|+|\xi-a_ j| <2^{m-2}\delta \min(\tau(a_ n),\tau(\xi))+\delta \tau(a_ j)
\\
&\le 2^{m-1}\delta \min(\tau(a_ n),\tau(a_ j))+\delta \tau(a_ j).
\end{split}
\]
This directly gives a contradiction if $\min(\tau(a_ n),\tau(a_ j))=\tau(a_ j)$. In case that $\min(\tau(a_ n),\tau(a_ j))=\tau(a_ n),$ using the Lipschitz condition (B)  we get
\[
|a_ n-a_ j|< 2^{m-1}\delta \min(\tau(a_ n),\tau(a_ j))+\delta \tau(a_ n)+c_ 2 \delta |a_ n-a_ j|
\]
Since $c_ 2 \delta \le 1/4$, and $m\ge 2$, we see that this implies
\[
d_{\tau}(a_ n,a_ j)<\frac{4}{3}(2^{m-1}+1) \delta\le 2^m \delta.
\]
Thus, without loss of generality, we assume that  $d_{\tau}(\xi,a_ n)\ge 2^{m-2}\delta$. For any $n$ and $k$ we write

\[
I_{n k}(\mu)  =   \sum_{j}\int_{D(\delta\tau(a_j))} | f_{a_n}(\xi)|\,| f_{a_k}(\xi)|\,\omega(\xi)\, d\mu(\xi).
\]
\mbox{}
\\
With this notation and taking into account \eqref{DiagDec}, we have

\begin{equation}\label{Eq-SC-0}
\|E\|^p_{S_p}\le \sum_{n.k: k\neq n} I_{nk}(\mu)^p.
\end{equation}
By Lemma \ref{Borichevlemma}, we have

\[
|F_{a_ n,N}(\xi)| \lesssim \,d_{\tau}(\xi,a_ n)^{-3N}.
\]
\mbox{}
\\
Apply this inequality raised to the power $1/2$, together with the fact that $d_{\tau}(\xi,a_ n)\ge 2^{m-2}\delta$ to get

\begin{equation}\label{Eq-SC-1}
\begin{split}
| f_{a_n}(\xi)| & = \frac{ |F_{a_ n,N}(\xi)|^{1/2}}{\tau(a_ n)} \,\, |F_{a_ n,N}(\xi)|^{1/2} \lesssim 2^{-\frac{3Nm}{2}} \, \frac{|f_{a_ n}(\xi)|^{1/2} }{\tau(a_ n)^{1/2}}.
\end{split}
\end{equation}
\mbox{}
\\
We also have
\begin{equation}\label{Eq-SC-2}
| f_{a_k}(\xi)|  \le  |f_{a_ k}(\xi)|^{1/2} \cdot \|K_{\xi}\|^{1/2}_{A^2_{\omega}}
\end{equation}
\mbox{}
\\
Putting \eqref{Eq-SC-1} and \eqref{Eq-SC-2} into the definition of $I_{nk}(\mu)$, and using the norm estimate
\[
\|K_{\xi}\|_{A^2_{\omega}}\asymp \tau(\xi)^{-1}\,\omega(\xi)^{-1/2},
\]
we obtain

\[
I_{nk} (\mu)\lesssim \frac{2^{-\frac{3Nm}{2}}}{\tau(a_ n)^{1/2}}\,\sum_{j}\frac{1}{\tau(a_ j)^{1/2}}\int_{D(\delta\tau(a_j))} | f_{a_n}(\xi)|^{1/2}\,| f_{a_k}(\xi)|^{1/2}\,\omega(\xi)^{1/2}\, d\mu(\xi).
\]

By Lemma \ref{subHarmP}, for $\xi\in D(\delta \tau(a_ j))$, one has

\[
\begin{split}
| f_{a_n}(\xi)|^{1/2}\,\omega(\xi)^{1/4} & \lesssim \left (\frac{1}{\tau(\xi)^2} \int_{D(\delta \tau(\xi))} | f_{a_n}(z)|^{p/2}\,\omega(z)^{p/4}\,dA(z)\right )^{1/p}
\\
\\
& \lesssim \tau(a_ j)^{-2/p} \,S_ n(a_ j)^{1/p},
\end{split}
\]
with
\[
S_{n}(x)  = \int_{D(3\delta\tau(x))} |f_{a_ n}(z)|^{p/2}\omega(z)^{p/4} dA(z).
\]
In the same manner we also have

\[
|f_{a_k}(\xi)|^{1/2}\,\omega(\xi)^{1/4}\lesssim \tau(a_ j)^{-2/p} \,S_ k(a_ j)^{1/p}.
\]
\mbox{}
\\
Therefore, there is a positive constant $C_ 2$ such that

\[
\begin{split}
I_{nk} (\mu)& \le C_ 2\cdot  \frac{2^{-\frac{3Nm}{2}}}{\tau(a_ n)^{1/2}}\,\sum_{j}\frac{\tau(a_ j)^{-4/p}}{\tau(a_ j)^{1/2}}\,\, S_ n (a_ j)^{1/p}\cdot S_ k(a_ j)^{1/p} \,\,\mu \Big(D(\delta\tau(a_j))\Big) \\
\\
&=C_ 2\cdot  \frac{2^{-\frac{3Nm}{2}}}{\tau(a_ n)^{1/2}}\,\sum_{j}\tau(a_ j)^{3/2-4/p}\cdot  S_ n (a_ j)^{1/p}\cdot S_ k(a_ j)^{1/p} \cdot \widehat{\mu}_{\delta}(a_ j).
\end{split}
\]
\mbox{}
\\
Since $0<p<1$ and $2^{-3Nm/2}\le 2^{-m}$, we get

\[
I_{nk} (\mu) ^p\le C_ 2^p\cdot  \frac{2^{-mp}}{\tau(a_ n)^{p/2}}\,\sum_{j}\tau(a_ j)^{\frac{3p}{2}-4}\cdot S_ n (a_ j)\cdot S_ k(a_ j) \cdot \widehat{\mu}_{\delta}(a_ j)^p.
\]
\mbox{}
\\
Bearing in mind \eqref{Eq-SC-0}, this gives

\begin{equation}\label{Eq-SC-3}
\begin{split}
\|E\|_{S_ p}^p &\le C_ 2^p\cdot  2^{-mp}\,\sum_{j}\tau(a_ j)^{\frac{3p}{2}-4} \, \widehat{\mu}_{\delta}(a_ j)^p \,
 \bigg ( \sum_{n} \frac{S_ n (a_ j)}{\tau(a_ n)^{p/2}}\bigg ) \cdot \bigg (\sum_ k S_ k(a_ j)\bigg ).
\end{split}
\end{equation}

\mbox{}
\\
On the other hand,  we have
\begin{equation}\label{Eq-SC-4}
\begin{split}
  \sum_{k} S_ k (a_ j) & = \sum_{k} \int_{D(3\delta\tau(a_j))} |f_{a_k}(z)|^{p/2}\,\omega(z)^{p/4} \,  dA(z)\\
\\
&= \int_{D(3\delta\tau(a_j))} \bigg (\sum_ k |F_{a_ k,N}(z)|^{p/2}\,\,\tau(a_ k)^{-p/2} \bigg )\,\,\omega(z)^{p/4} \,  dA(z).
\end{split}
\end{equation}
\mbox{}
\\
 Now, we claim that
\begin{equation}
\begin{split}\label{eq:Sp2}
\sum_ k \tau(a_ k)^{-p/2}\, |F_{a_ k, N}(z)|^{p/2}\lesssim \,
\tau(z)^{-p/2}\,\omega(z)^{-p/4}\,.
\end{split}\end{equation}
\mbox{}
\\
 In order to prove \eqref{eq:Sp2}, note first that using the
estimate \eqref{BL1} in Lemma \ref{Borichevlemma}, the estimate \eqref{asymptau} and $(iv)$ of Lemma
\ref{Lcoverings}, we deduce that
\begin{equation}
\begin{split}\label{eq:zk}
\sum_{\{a_k\in D(\delta_ 0 \tau(z))\}}\!\! \!\!\!\! \tau(a_
k)^{-p/2}&\,|F_{a_ k, N}(z)|^{p/2}
\\
&\lesssim \omega(z)^{-p/4}\sum_{\{a_k\in D(\delta_ 0
\tau(z))\}}\!\!\!\!\!\! \tau(a_ k)^{-p/2}
 \lesssim \tau(z)^{-p/2}\,\omega(z)^{-p/4}\,.
\end{split}
\end{equation}
\par On the other hand, an application of  \eqref{BL2}
gives
\begin{displaymath}
\begin{split}
\sum_{\{a_k\notin D(\delta_ 0 \tau(z))\}}\!\! \!\!\!\! \tau(a_
k)^{-p/2} & \,|F_{a_ k, N}(z)|^{p/2}
\\ & \lesssim
\omega(z)^{-p/4}\tau(z)^{\frac{3Np}{2}-\frac{p}{2}-2}\sum_{\{a_k\notin D(\delta_ 0
\tau(z))\}}\!\! \frac{\tau(a_ k)^2}{\,\,|z-a_k|^{3Np/2}}
\\
&=\omega(z)^{-p/4}\tau(z)^{\frac{3Np}{2}-\frac{p}{2}-2}\,\sum_{j=0}^{\infty}\,\sum_{a_
k\in R_ j(z)} \frac{\tau(a_ k)^2}{\,\,|z-a_k|^{3Np/2}},
\end{split}
\end{displaymath}
where
\begin{displaymath}
R_ j(z)=\left\{\zeta\in\D: 2^{j}\delta_ 0\tau(z)<|\zeta-z|\leq
2^{j+1}\delta_ 0\tau(z)\right\},\, \, j=0,1,2\dots
\end{displaymath}
\mbox{}
\\
 Now observe that, using condition (B) in the definition of the class $\mathcal{L}$, it is easy to see that,
for $j=0,1,2,\dots$,
$$
D(\delta_0\tau(a_k))\subset
D(5\delta_02^{j}\tau(z))\quad \textrm{ if }\quad a_k\in
D(2^{j+1}\delta_0\tau(z)).$$
This fact together with the finite
multiplicity of the covering (see Lemma \ref{Lcoverings}) gives
\begin{displaymath}
\sum_{a_ k\in R_ j(z)}\!\!\tau(a_ k)^2\lesssim
\,m\Big(D(5\delta_ 0 2^j\tau(z))\Big)\lesssim 2^{2j}\tau(z)^2.
\end{displaymath}
Therefore, as $3Np-4>2p$,
\begin{equation*}
\begin{split}\label{eq:ii}
\sum_{\{a_k\notin D(\delta_ 0 \tau(z))\}}\!\! \!\!\!\! \tau(a_
k)^{-p/2}\,|F_{a_ k, N}(z)|^{p/2}
& \lesssim \omega(z)^{-p/4}\tau(z)^{-\frac{p}{2}-2}\sum_{j=0}^\infty
2^{-\frac{3Npj}{2}}\sum_{a_k\in R_ j(z)}\!\!\tau(a_ k)^2
\\ & \lesssim
\omega(z)^{-p/4}\tau(z)^{-p/2}\sum_{j=0}^\infty 2^{\frac{(4-3Np)}{2}j}
\\ & \lesssim \omega(z)^{-p/4}\tau(z)^{-p/2},
\end{split}\end{equation*}
which together with \eqref{eq:zk}, proves  the claim \eqref{eq:Sp2}.

\mbox{}
\\
Putting \eqref{eq:Sp2} into \eqref{Eq-SC-4} gives
\[
 \sum_{k} S_ k (a_ j) \lesssim \tau(a_ j)^{2-p/2}.
\]
\mbox{}
\\
Similarly,

\[
 \sum_{n} \frac{S_ n (a_ j)}{\tau(a_ n)^{p/2}} \lesssim \tau(a_ j)^{2-p}.
\]
Putting these estimates into \eqref{Eq-SC-3} we finally get

\[
\|E\|^p_{S_ p} \le C_ 2^p\cdot C_ 3 \cdot  2^{-mp}\,\sum_{j} \widehat{\mu}_{\delta}(a_ j)^p
\]
\mbox{}
for some positive constant $C_ 3$. Combining this with  \eqref{DTDE}, \eqref{NDD} and choosing $m$ large enough so that $$C_ 2^p\cdot C_ 3 \cdot  2^{-mp}\le C_ 1/2,$$ then we deduce that

\[
\sum_{j} \widehat{\mu}_{\delta}(a_ j)^p\le \frac{C_ 1}{2} \,\|T\|^p_{S_ p}\le C_ 4 \|T_{\mu}\|^p_{S_ p}.
\]
\mbox{}
\\
Since this holds for each one of the $M$ subsequences of $\{z_ n\}$, we obtain

\begin{equation}\label{E-SC-F}
\sum_{n} \widehat{\mu}_{\delta}(z_ n)^p\le C_ 4\, M \,\|T_{\mu}\|^p_{S_ p}
\end{equation}
for all locally finite positive Borel measures $\mu$ such that
\[
\sum_{n} \widehat{\mu}_{\delta}(z_ n)^p<\infty.
\]
Finally, an easy approximation argument then shows that \eqref{E-SC-F} actually holds for all locally finite positive Borel measures $\mu$. The proof is complete.
\end{proof}



\begin{thebibliography}{99}

\bibitem{BDK} A. Borichev, R. Dhuez \and K. Kellay, \emph{Sampling and interpolation in
large Bergman and Fock spaces,} J. Funct. Anal. 242
(2007), 563--606.

\bibitem{CIJ} H.R. Cho, J. Isralowitz \and J-C. Joo, \emph{Toeplitz operators on Fock-Sobolev type spaces}, Integral Equations Operator Theory 82 (2015), 1--32.

\bibitem{D2} M. Dostanic, \emph{Integration operators on Bergman
spaces with exponential weights}, Revista Mat. Iberoamericana
23 (2007), 421--436.

\bibitem {GP1}  P. Galanopoulos \and  J. Pau, \emph{Hankel operators on large weighted Bergman spaces}, Ann. Acad. Sci. Fenn. Math. 37 (2012), 635--648.

\bibitem{HL} Z.Hu \and X. Lv, \emph{Toeplitz operators from one Fock space to another}, Integral Equations Operator Theory 70 (2011), 541--559.

\bibitem{KrMc} T. Kriete \and B. MacCluer, \emph{Composition operators on Large Weighted Bergman spaces}, Indiana Univ. Math. J. 41 (1992),
755--788.
\bibitem{LR1} P. Lin \and R. Rochberg, {\em{Hankel operators on the weighted Bergman spaces with exponential type weights}}, Integral Equations Operator Theory 21 (1995), 460--483.
\bibitem{LR2} P. Lin \and R. Rochberg, {\em{Trace ideal criteria for Toeplitz and Hankel operators on the weighted Bergman spaces with exponential type weights}}, Pacific J. Math. 173 (1996), 127--146.

\bibitem{Lue} D. Luecking, \emph{Trace ideal criteria for Toeplitz operators}, J. Funct. Anal. 73 (1987), 345--368.

\bibitem{M} C.A. McCarthy, \emph{$C_ p$}, Israel J. Math. 5 (1967), 249--271.

\bibitem{Me} T. Mengestie, \emph{On Toeplitz operators between Fock spaces}, Integral Equations Operator Theory 78 (2014), 213--224.

\bibitem{O} V.~L. Oleinik, {\em{Embedding theorems for weighted classes of harmonic and analytic
functions}}, J. Soviet. Math. \textbf{9} (1978), 228--243.

\bibitem{PP1}  J. Pau \and J.A. Pel\'{a}ez, \emph{Embedding theorems and integration operators on
Bergman spaces with rapidly decreasing weights}, J. Funct. Anal. 259 (2010), 2727--2756.

\bibitem{PP2} J. Pau \and J.A. Pel\'{a}ez, \emph{Volterra type operators on Bergman spaces with exponential weights}, Contemporary Mathematics 561 (2012), 239--252.

\bibitem{RW1} R. Rochberg \and Z. Wu, \emph{Toeplitz operators on Dirichlet spaces}, Integral Equations Operator Theory 15 (1992), 325--342.


\bibitem{Sem} S. Semmes, \emph{Trace ideal criteria for Hankel operators, and applications to Besov spaces}, Integral Equations Operator Theory 7 (1984), 241--281.

\bibitem{Z1} K. Zhu, \emph{Positive Toeplitz operators on weighted Bergman spaces of bounded symmetric
domains}, J. Operator Theory 20 (1988), 329--357.

\bibitem{Zhu} K. Zhu, `Operator Theory in Function Spaces', Second Edition,
Math. Surveys and Monographs 138, American Mathematical
Society: Providence, Rhode Island, 2007.

\end{thebibliography}
\end{document}